\ifx\ArticleWithinIssue\undefined
\documentclass{resmath_arxiv} 

\fi

\authortitleforheadfootcontent{M. V.~Pratsiovytyi, D. M.~Karvatskyi, O. P.~Makarchuk}{Infinite Bernoulli convolutions and their properties} 

\udc{514.8, 517.5, 519.21}

\title{Infinite Bernoulli convolutions generated by multigeometric series and their properties}

\thanks{The first and third authors were supported by a grant from the Simons Foundation (grant 1290607, P.M.V. and M.O.P.). The second author worked at the University of St. Andrews within the framework of the Isaac Newton Institute Solidarity Programme and also received additional support from the London Mathematical Society.}

\author{M. V.~Pratsiovytyi${}^{*,**}$, D. M.~Karvatskyi${}^{**}$, O. P.~Makarchuk${}^{***}$}
       {${}^{*}$~Dragomanov Ukrainian State University,
        Kyiv 01601. \\
                ${}^{**}$~Institute of Mathematics of NAS of Ukraine,
        Kyiv 01024. \\ 
        ${}^{***}$~Central Ukrainian State University,
        Kropyvnytskyi 25006. \\ {\it E-mails: prats4444@gmail.com, karvatsky@imath.kiev.ua, makolpet@gmail.com}}
\date{31.01.2020}

\ifx\ArticleWithinIssue\undefined
\begin{document}\fi 

\sloppy

\begin{article}

\abstracten{The paper is devoted to infinite Bernoulli convolutions generated by positive multigeometric series and to probability distributions of random variables whose digits in an even integer base-$s$ expansion with two redundant digits form a sequence of independent and identically distributed random variables.

The main objects of the article are random variables:

$\xi=\sum\limits_{n=1}^{\infty}\frac{\xi_n}{s^n}$,
where $(\xi_n)$ is a sequence of independent and identically distributed random variables taking values $0, 1, 2, \dots, s-1, s, s+1$ with probabilities
$p_0$, $p_1$, $p_2, \dots, p_{s-1}, p_s, p_{s+1}$ respectively $(3<s \in \mathbb{N})$;
$$\eta=\sum\limits_{n=1}^{\infty}\left[\frac{3\eta_{(n-1)(m+1)+1}}{s^n}+\sum\limits_{j=1}^{m}
\frac{2\eta_{(n-1)(m+1)+1+j}}{s^n}\right],$$
where $(\eta_n)$ is a sequence of independent and identically distributed random variables that take values $0$ and $1$ with probabilities $q_0>0$ and ~ ~ ~ ~ ~ ~ ~ ~  $q_1=1-q_0>0$. We study conditions under which the above random variables have absolutely continuous or singular distributions as well as topological, metric, and fractal properties of their supports. The main focus is on the case where the spectrum is a Cantorval.

For the case $s=4$, we establish necessary and sufficient conditions for singularity and absolute continuity of the distributions of $\xi$ and $\eta$, in particular when they are supported on the Guthrie–Nymann Cantorval.  For an arbitrary even $s>4$, we determine necessary and sufficient conditions for a random variable $\xi$ to admit a decomposition into the sum of two independent random variables, one of which is uniformly distributed on the unit interval and therefore absolutely continuous. Using the method of characteristic functions, sufficient conditions for singularity and necessary conditions for absolute continuity are obtained. For Cantorvals arising as spectra of the corresponding distributions, the structure and fractal properties of the boundary are investigated.} 
\smallskip
\keywordsen{infinite Bernoulli convolutions, singular distributions of random variables, the set of subsums, Cantorval, multigeometric series} 

\bigskip

\abstractua{Стаття присвячена нескінченним згорткам Бернуллі, керованим додатними мультигеометричними рядами, і розподілам випадкових величин, цифри зображення яких у системі з парною натуральною основою $s$ і двома надлишковими цифрами є незалежними та однаково розподіленими випадковими величинами.

Основними об'єктами дослідження є випадкові величини:

$\xi=\sum\limits_{n=1}^{\infty}\frac{\xi_n}{s^n}$,
де $(\xi_n)$ -- послідовність незалежних  випадкових величин, які набувають значень $0,1,2,...,s-1,s, s+1$ з ймовірностями
$p_0$, $p_1$, $p_2,...,p_{s-1}, p_s, p_{s+1}$ відповідно $(3<s \in \mathbb{N})$;
$$\eta=\sum\limits_{n=1}^{\infty}\left[\frac{3\eta_{(n-1)(m+1)+1}}{s^n}+\sum\limits_{j=1}^{m}
\frac{2\eta_{(n-1)(m+1)+1+j}}{s^n}\right],$$
де $(\eta_n)$-- послідовність незалежних однаково розподілених випадкових величин, які набувають значень $0$ та $1$ з ймовірностями $q_0>0$ і $q_1=1-q_0>0$ відповідно. Вивчаються умови абсолютної неперервності та сингулярності розподілів цих випадкових величин, тополого-метричні і фрактальні властивості їх носіїв. Основна увага зосереджена на випадках, коли спектром розподілу є канторвал.

У випадку $s=4$ для випадкових величин $\xi$ та $\eta$ знайдено необхідні і достатні умови сингулярності та абсолютної неперервності розподілів, зокрема коли вони зосереджені на канторвалі Гатрі-Німана. Для довільного парного $s>4$  знайдено необхідні і достатні умови, за яких $\xi$ розкладається в суму двох незалежних випадкових величин, одна з яких має рівномірний розподіл на одиничному відрізку, що рівносильно її абсолютній неперервності. За допомогою методу характеристичних функцій отримано достатні умови сингулярності та необхідні умови абсолютної неперервності. Для канторвалів, що є спектрами розподілів, вивчено структуру і фрактальні властивості межі.
} 
\smallskip
\keywordsua{нескінченні згортки Бернуллі, сингулярні розподіли випадкових величин, множини неповних сум, канторвал, мультигеометричні ряди} 

\bigskip

\mathsubjclass{Pri 60E05, Sec 40A05, 28A80}

\bigskip

\section{Introduction}

Singular continuous distributions of random variables are concentrated on sets of Lebesgue measure zero, often with fractal structure. Such distributions naturally arise in the study of random variables with independent digits in various systems of representation (coding) of real numbers~\cite{PF},~\cite{PM}. A typical phenomenon is that an infinite Bernoulli convolution generated by a convergent positive series has a singular distribution. We are interested in infinite Bernoulli convolutions generated by positive multigeometric series whose sets of subsums are Cantorvals. Such series and their sets of subsums are currently the subject of active research. The general topological and metric theory of subsums of absolutely convergent series is not yet sufficiently developed and is still at a constructive stage of study \cite{GP25}.

This paper is devoted to probability measures on sets of subsums of multigeometric series, in particular those that are Cantorvals.

We recall some key definitions and facts that will be used later on, and also formulate the problems that are partially addressed in this paper.
Let $A_s=\{0,1,\dots,s-1\}$ be an alphabet, and let $L_s=A_s\times A_s\times \dots \times A_s \times \dots$ be the space of sequences with elements from this alphabet for some $2 \leq s \in \mathbb{N}$.

A \textit{subsum} of a convergent series $r_0=v_1+v_2+\dots+v_n+\dots$, determined by a set $M\subset \mathbb{N}=\{1,2,\dots\}$, is the number
$$
x(M)=\sum\limits_{n\in M\subset \mathbb{N}} v_n=\sum\limits_{n=1}^{\infty}\varepsilon_n v_n,
\quad \text{where } \varepsilon_n=1 \text{ if } n\in M,\text{ and } \varepsilon_n=0 \text{ if } n\notin M.
$$
It is clear that each partial sum $S_n=v_1+v_2+\dots+v_n$ and each remainder of the series $r_n=v_{n+1}+v_{n+2}+\dots$ is a subsum of the series, although these do not exhaust all subsums.
The \textit{set of subsums} of a given convergent series $\sum v_n$ is the set of all its subsums, i.e.,
$$
E(v_n)=\left\{x: x=\sum_{n=1}^{\infty}\varepsilon_n v_n,\ (\varepsilon_n)\in L_2\right\}.
$$

It is well known that the set of subsums of a convergent positive series is a perfect continuum set that is symmetric with respect to its midpoint $r_0 / 2$. It may be a finite union of intervals, or it may be a nowhere dense set. Kakeya's result~\cite{kak} provides necessary and sufficient conditions for the set of subsums to belong to the first type, and sufficient conditions for it to belong to the second type. A complete topological classification of sets of subsums of convergent positive series is presented in \cite{Nymann}.

\begin{theorem}
\label{GN-Theorem}
The set of subsums $E(v_n)$ of a convergent positive series belongs to one of the following three types:

\begin{enumerate}

\item an interval or a finite union of intervals;

\item a set homeomorphic to the classical Cantor set
$$
C=\left\{x: x=\sum\limits_{k=1}^{\infty}\frac{\alpha_k}{3^k},\ \alpha_k\in\{0,2\}\ \forall k\in \mathbb{N}\right\};
$$

\item an $M$-Cantorval, that is, a set homeomorphic to
$$
T^* \equiv C \cup \bigcup_{n=1}^{\infty} G_{2n-1} \equiv [0,1]\setminus \bigcup_{n=1}^{\infty} G_{2n},
$$
where
$$
G_n=\bigcup\limits_{\alpha_1\in\{0,1\}}\dots\bigcup\limits_{\alpha_{n-1}\in\{0,1\}}
\left\{x:\frac{1}{3^n}+\sum\limits_{k=1}^{n-1}\frac{2\alpha_k}{3^k}<x<\frac{2}{3^n}+
\sum\limits_{k=1}^{n-1}\frac{2\alpha_k}{3^k}\right\}.
$$

\end{enumerate}
\end{theorem}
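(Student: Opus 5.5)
The plan follows the Guthrie--Nymann route, working with the sets $E_n=\{\sum_{k>n}\varepsilon_k v_k\}$ of subsums of the remainders. We may assume $v_n>0$, $v_n\to0$, and (after reordering, which does not change the set of subsums of a positive series) that $v_n$ is nonincreasing. Then
\[
E(v_n)=F_n+E_n,\qquad F_n=\Bigl\{\sum_{k\le n}\varepsilon_k v_k\Bigr\},\qquad E_n\subset[0,r_n].
\]
So $E(v_n)$ is a finite union of translates of a compact set of diameter $r_n$. We also recall the standard facts: $E(v_n)$ is perfect, compact, and symmetric about $r_0/2$.

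\textbf{Step 1 (component structure).} Every component of $E=E(v_n)$ is a point or a closed interval.
\begin{itemize}
\item If some component is a nondegenerate interval $I$, then $I$ meets one of the finitely many translates $f+E_n$. By a Baire category argument, some $f+E_n$ contains an interval.
\item Rescaling, $E_m$ then contains an interval for all $m$. Hence every point of $E$ lies within distance $r_m$ of an interval contained in $E$, so the union of interval components is dense in $E$.
\item If there are only finitely many interval components, density forces $E$ to be a finite union of intervals (type 1).
\item If there are no interval components, $E$ is compact, perfect and totally disconnected, so it is homeomorphic to $C$ by Brouwer's characterization (type 2).
\end{itemize}

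\textbf{Step 2 (infinitely many interval components).} Let $\mathcal I$ be the family of nondegenerate components of $E$, and let $\mathcal G$ be the family of bounded gaps of $E$. Both families are infinite. I would show:

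(a) Every endpoint of an interval component is a limit of gaps, and of other interval components, accumulating from the outside side. By self-similarity, $E_n$ near its endpoints $0$ and $r_n$ looks like $E_m$ near its endpoints. Since $E$ is not a finite union of intervals, neither $0$ nor $r_n$ is an interior point of $E_n$ in the one-sided sense for large $n$. Symmetry then transfers this to every copy $f+E_n$.

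(b) Symmetrically, every endpoint of a gap is a limit of interval components from the outside side, by the density obtained in Step 1.

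(c) No interval component is adjacent to another one, and no gap is adjacent to another gap.

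(d) Points of $E$ that are endpoints of neither type are two-sided limits of both intervals and gaps.

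These are exactly the defining properties of $T^*$: the closures of the $G_{2n-1}$ are the interval components, the $G_{2n}$ are the gaps, and $C$ supplies the remaining points.

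\textbf{Step 3 (building the homeomorphism).} The order types of $\mathcal I\cup\mathcal G$ in $E$ and of $\{G_k\}$ in $T^*$ are both countable dense orders without endpoints, with two interleaved dense classes in the sense of (a)--(d). A back-and-forth argument, as in Cantor's theorem on countable dense orders, matches interval components to the $\overline{G_{2n-1}}$ and gaps to the $G_{2n}$ monotonically. The bijection is then extended linearly on intervals and by continuity on the remaining points, giving an increasing homeomorphism $E\to T^*$.

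The main obstacle is Step 2(a)--(c): proving that in the non-finite case interval components never touch each other or the ends of $E$ without accumulating gaps. I would attempt this first via a gap-counting argument. If some endpoint were isolated from gaps on its outer side, then $E_n$ would contain $[0,\delta]$ for all $n$ with $\delta$ comparable to $r_n$, and Kakeya-type estimates ($v_n\le r_n$ eventually) would force $E$ to be a finite union of intervals, a contradiction.
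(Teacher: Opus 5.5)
Since the paper only quotes this theorem from the literature and gives no proof, I compare your proposal with the standard argument. Your Step 1 and the back-and-forth in Step 3 are fine in outline. One correction in Step 1: ``rescaling'' is not the right justification. Instead, $E_m\supset E_n$ for $m<n$, and $E_n$ is a finite union of translates of $E_m$ for $m>n$, so the finite-union argument applies again.

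The genuine gap is Step 2, which you flag yourself, and the reasons you offer there do not work.
\begin{itemize}
\item A general series has no self-similarity, so $E_n$ near $0$ is not a scaled copy of $E_m$.
\item More seriously, knowing that each copy $f+E_n$ has one-point components at its own ends says nothing about the ends of a nondegenerate component $[a,b]$ of $E$. Such a component is a union of overlapping copies, and $a$ need not be an endpoint of any single copy. So ``symmetry transfers this to every copy'' does not yield (a).
\item Your (c) lists only the trivial non-adjacencies (interval--interval, gap--gap). It omits the one everything hinges on: \emph{no gap of $E$ abuts a nondegenerate component}. Without it, neither (a), nor (b), nor the fact that gap endpoints are one-point components, nor the two-class density needed for the back-and-forth follows.
\item Your (d) as written also includes interior points of intervals; it should refer only to one-point components.
\item Your closing paragraph targets the wrong configuration. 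An endpoint of an interval component is never ``isolated from gaps on its outer side''; the case to exclude is a gap adjacent to it.
\end{itemize}

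The missing lemma has a short proof.
\begin{enumerate}
\item If $E$ is not a finite union of intervals, then $v_k>r_k$ for infinitely many $k$; otherwise $E_{N-1}=[0,r_{N-1}]$ for some $N$, by Kakeya's greedy argument.
\item For such $k>n$, a subsum of the tail $\sum_{j>n}v_j$ that is smaller than $v_k$ uses only indices $j>k$, by monotonicity, hence is at most $r_k$. So the gaps $(r_k,v_k)$ of $E_n$ accumulate at $0$, and $E_n$ contains no interval $(0,\eta)$. Taking $n=0$ and using symmetry, $0$ and $r_0$ are one-point components of $E$.
\item Now let $(c,a)$ be a gap and $[a,b]$, with $a<b$, a component of $E$. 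Take $n$ with $r_n<a-c$.
\item For any representation $a=\sum\varepsilon_k v_k$, the finite subsum $\sum_{k\le n}\varepsilon_k v_k$ lies in $E\cap(c,a]$. Hence it equals $a$, so $a\in F_n$.
\item Any $x\in(a,b)$ can be written as $g+e$ with $g\in F_n$ and $e\in E_n$. If $g<a$, then $g\le c$ and $x\le c+r_n<a$, which is impossible; so $g\ge a$.
\item Let $g_1$ be the least element of $F_n$ exceeding $a$ (or $+\infty$ if there is none). Then $(a,\min(g_1,b))\subset a+E_n$, so $E_n$ contains an interval $(0,\eta)$. This contradicts step 2.
\item The symmetry $x\mapsto r_0-x$ handles right endpoints.
\end{enumerate}
With this lemma and the density of $\mathrm{int}\,E$ in $E$, every gap endpoint is a one-point component. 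Every endpoint of an interval component is then a limit, from outside, of gaps, of one-point components and of other interval components, and Step 3 goes through. A single $n$ suffices here; no estimates with ``$\delta$ comparable to $r_n$'' are needed.
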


At present, necessary and sufficient conditions for the set of subsums to be a Cantorval or a nowhere dense set remain unknown. Criteria for these sets to be of Lebesgue measure zero in the case of nowhere density are not yet fully understood. These problems are being actively investigated for various classes of numerical series with a relatively simple structure. The most significant results in this direction were obtained for multigeometric series  
$$k_1+k_2+\dots+k_m+k_1q+\dots+k_mq+\dots + k_1q^i+\dots+k_mq^i+\dots,$$
where $k_1, k_2, \dots, k_m$ are fixed positive scalars, and $0<q<1$. In particular, conditions for the set of subsums of multigeometric series to be a Cantorval were established in \cite{Banakh}, \cite{BP17}, \cite{Bartoszewicz}, \cite{Ferdinands}, \cite{Nit15}, with further generalizations provided in \cite{KMV}, \cite{PKC}.
Some conditions for the set of subsums of a non-multigeometric series were found in \cite{VMPS19}. 

One of the simplest and most famous Cantorvals was introduced by J.~Guthrie and J.~Nymann in \cite{Guthrie 1988}, where the authors considered the set
$$
X=\left\{\sum_{n=1}^{\infty} \frac{\alpha_n}{4^n} : (\alpha_n) \in \{0, 2, 3, 5 \}^{\mathbb{N}} \right\},
$$
which is the set of subsums of the series
\begin{equation*}
\label{GNS}
\frac{3}{4}+\frac{2}{4}+\frac{3}{4^2}+\frac{2}{4^2}+\frac{3}{4^3}+\frac{2}{4^3}+\dots+\frac{3}{4^k}+\frac{2}{4^k}+\dots.
\end{equation*}
Its properties were also studied in \cite{Bielas}, \cite{Glab}. In particular, the latter paper investigates the set $U$ of numbers having a unique representation as a subsum of the Guthrie--Nymann series. It was shown that $U$ is dense in $X$, but has empty interior. Some modifications of the set $X$ were also studied in \cite{PKS}.

By an \emph{infinite Bernoulli convolution generated by a convergent series} ~ ~ ~ $v_1+v_2+\dots+v_n+\dots$, we mean the distribution of the random variable
$
\eta=v_1\eta_1+v_2\eta_2+\dots+v_n\eta_n+\dots,
$
where $(\eta_n)$ is a sequence of independent and identically distributed random variables taking the values $0$ and $1$ with probabilities $q_0>0$ and $q_1=1-q_0>0$, respectively.

According to the Jessen--Wintner theorem~\cite{win}, the distribution of $\eta$ is either purely absolutely continuous (probability that $\eta$ takes a value in a zero Lebesgue measure set is $0$) or purely singular (there exists a set of Lebesgue measure zero such that $\eta$ belongs to it with probability $1$). In general, necessary and sufficient conditions for an infinite Bernoulli convolution to be singular or absolutely continuous are still unknown. This problem is known as the problem of refining or deepening the Jessen--Wintner theorem. In some special classes of random variables, in particular for infinite Bernoulli convolutions, it can sometimes be solved completely \cite{MK24}, \cite{MakarchukKhaletsky2025}.

\section{Representations of numbers in a system with a redundant alphabet}

Throughout the paper, we use the following notation: $m$ is a fixed positive integer parameter, $s=2m+2$, $A=\{0,1,\dots,s,s+1\}$ is the redundant alphabet, $L=A\times A\times \dots$ is the set of sequences of elements of the alphabet. In a similar manner, we define $A_0\equiv A\setminus\{1,s\}$ and $L_0=A_0\times A_0\times\cdots$.

If
$
x=\sum\limits_{n=1}^{\infty}s^{-n}\alpha_n\equiv \Delta_{\alpha_1\alpha_2\dots\alpha_n\dots},
$
then the latter symbolic expression will be called the \emph{$\Delta$-representation} of $x$. Parentheses in the $\Delta$-representation of a number will indicate a period. A \emph{cylinder} of rank $m$ with base $c_1c_2\dots c_m$ is the set $\Delta_{c_1c_2\dots c_m}$ of all numbers $x$ having a $\Delta$-representation
$
x=\Delta_{\alpha_1\alpha_2\dots\alpha_n\dots},
$
such that $\alpha_k=c_k$ for $k=\overline{1,m}$. The cylinder $\Delta_{c_1c_2\dots c_m}$ is the interval $[a_m;b_m]$, where
$
a_m=\sum\limits_{k=1}^{m}\frac{c_k}{s^k},
b_m=a_m+\frac{s+1}{s^m(s-1)},
$
and
$$
\Delta_{c_1c_2\dots c_m}
=
\Delta_{c_1c_2\dots c_m0}\cup
\Delta_{c_1c_2\dots c_m1}\cup\dots\cup
\Delta_{c_1c_2\dots c_m[s+1]}.
$$

%
%
%

Note that numbers from the interval $\left[0,\frac{s+1}{s-1}\right]$ in the numeration system with base $s$ and two redundant digits $s$ and $s+1$ may have a unique representation, a finite, countable, or continuum many distinct representations. Moreover, any pair of consecutive digits $(a,s+j)$ in a representation of a number can be replaced, without changing the value of the corresponding series, by the alternative pair $(a+1,j)$, where $a\in \{0,1,\dots,s\}$ and $j\in \{0,1\}$. Therefore, the number $x=\Delta_{(0s)}$ has a continuum number of distinct representations, since the pairs $(0,s)$ and $(1,0)$ are interchangeable. The number $x=\Delta_{(1)}$ has countably many representations 
$
x=\Delta_{(1)}
=\Delta_{0(s)}=\Delta_{10(s)}=\Delta_{11\dots10(s)}.
$

Assuming that $s\geq 4$, we prove that the number $x_0=\Delta_{(23)}$ has a unique $\Delta$-representation.
Indeed, suppose that $x_0$ has another representation $\Delta_{\alpha_1\alpha_2\dots\alpha_n\dots}$ different from the one indicated above. Then there exists a smallest $k$ such that the digit $\alpha_k$ does not coincide with the corresponding digit in the given representation. If $k=2n-1$, then by assumption $\alpha_k\neq 2$. However,
\[
\Delta_{23\dots232(2)}=\Delta_{23\dots231(s+1)}<x_0<\Delta_{23\dots233(0)}.
\]
Thus, $\alpha_{2n-1}=2$. If $k=2n$, then by assumption $\alpha_k\neq 3$. However,
\[
\Delta_{23\dots2323(2)}=\Delta_{23\dots2322(s+1)}<x_0<\Delta_{23\dots2324(0)}.
\]
Therefore, $\alpha_{2n}=3$. Hence, the number $x_0$ has a unique $\Delta$-representation.

Almost all numbers in the interval $\left[0,\frac{s+1}{s-1}\right]$, in the sense of Lebesgue measure, have continuum many distinct $\Delta$-representations (a complete answer to the question regarding distinct representations can be found in~\cite{prv}).

\section{A random variable with independent and identically distributed digits}

Let us consider a random variable
\[
\xi=\sum\limits_{n=1}^{\infty}s^{-n}\xi_n\equiv\Delta_{\xi_1\xi_2\dots\xi_n\dots},
\]
where $(\xi_n)$ is a sequence of independent random variables taking the values
$0,1,2,\dots,s-1,s,s+1$ with probabilities
$p_0,p_1,p_2,\dots,p_{s-1},p_s,p_{s+1}$, respectively, where $p_i<1$.
It is clear that the set $E_\xi$ of values of the random variable $\xi$ coincides with the interval $\left[0,\frac{s+1}{s-1}\right]$.
We are interested in revealing the type of the distribution of $\xi$ and establishing structural, topological, metric, and fractal properties of its spectrum (the set of growth points of the distribution function).

According to the Jessen--Wintner theorem, the random variable $\xi$ has either a purely singular distribution or a purely absolutely continuous distribution. In the general setting, the problem of finding necessary and sufficient conditions under which the distribution is singular is difficult. This paper is devoted to a partial solution of this problem using the method of characteristic functions and the method of distinguishing the absolutely continuous component. This approach was effectively applied in the study of similar objects in~\cite{P1996}.

Note that in the case $m=1$ and $p_1=0=p_4$, the spectrum of the distribution of $\xi$ coincides with the Guthrie--Nymann Cantorval. For arbitrary $m \in \mathbb{N}$, under the condition $p_1=0=p_s$, the spectrum of the distribution of $\xi$ coincides with the set of subsums of the series
\[
\frac{3}{s}+\underbrace{\frac{2}{s}+\dots+\frac{2}{s}}_m+\frac{3}{s^2}+\underbrace{\frac{2}{s^2}+\dots+\frac{2}{s^2}}_m+\dots+
\frac{3}{s^n}+\underbrace{\frac{2}{s^n}+\dots+\frac{2}{s^n}}_m+\dots,
\]
which is also a Cantorval and was partially studied in \cite{Banakiewicz}. The infinite Bernoulli convolution generated by this series is closely related to the distribution of the random variable $\xi$. We describe this relationship in detail below.

%
%
%

\section{Absolutely continuous distributions on the Guthrie--Nymann Cantorval}

\begin{theorem}
Let $m=1$, that is, $s=4$. If
$
p_2=p_3=p_0+p_4=p_1+p_5=\frac{1}{4}
$
and $p_1\geq p_0$, then  $\xi$ has an absolutely continuous distribution.
\end{theorem}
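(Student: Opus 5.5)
The plan is to show that, under the stated conditions, the law of a single digit $\xi_n$ is the convolution of the uniform distribution on $\{0,1,2,3\}$ with some probability distribution on $\{0,1,2\}$. Summing over $n$, this splits $\xi$ into an independent sum of a uniform random variable on $[0,1]$ and some other random variable. A convolution of an absolutely continuous law with an arbitrary law is absolutely continuous, which gives the claim.

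\textbf{Step 1: factor the generating polynomial.} The generating polynomial of $\xi_n$ is
\[
f(z)=p_0+p_1z+\tfrac14 z^2+\tfrac14 z^3+p_4z^4+p_5z^5 .
\]
I look for a factorization
\[
f(z)=\tfrac14(1+z+z^2+z^3)\,(h_0+h_1z+h_2z^2).
\]
Comparing coefficients gives the following.
\begin{itemize}
\item From $z^0$ and $z^5$: $h_0=4p_0$ and $h_2=4p_5$.
\item From $z^1$: $h_0+h_1=4p_1$, so $h_1=4(p_1-p_0)$.
\item From $z^2$ and $z^3$: $h_0+h_1+h_2=1$. This is exactly $4(p_1+p_5)=1$.
\item From $z^4$: $h_1+h_2=4p_4$. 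This is equivalent to $p_4=p_1+p_5-p_0=\tfrac14-p_0$, i.e.\ the hypothesis $p_0+p_4=\tfrac14$.
\end{itemize}
So all hypotheses $p_2=p_3=p_0+p_4=p_1+p_5=\frac14$ are used exactly here. The nonnegativity of $h_0,h_1,h_2$ reduces to the single condition $p_1\ge p_0$. Thus $(h_0,h_1,h_2)$ is a genuine probability vector.

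\textbf{Step 2: realize the factorization on a probability space.} Since the distribution of $\xi$ depends only on the common law of the digits, I may realize the digits differently. Take independent random variables $U_1,H_1,U_2,H_2,\dots$ with the following laws:
\begin{itemize}
\item each $U_n$ is uniform on $\{0,1,2,3\}$;
\item each $H_n$ takes the values $0,1,2$ with probabilities $h_0,h_1,h_2$.
\end{itemize}
By Step 1, $U_n+H_n$ has the same law as $\xi_n$, and the variables $U_n+H_n$ are independent. Hence
\[
\xi \overset{d}{=} \sum_{n\ge1}\frac{U_n}{4^n}+\sum_{n\ge1}\frac{H_n}{4^n}=:\zeta+\theta ,
\]
where $\zeta$ and $\theta$ are independent. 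Equivalently, the characteristic function satisfies $\varphi_\xi=\varphi_\zeta\varphi_\theta$ by the product formula over $n$.

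\textbf{Step 3: conclude.} The variable $\zeta$ has independent base-$4$ digits that are uniform on $\{0,1,2,3\}$, so $\zeta$ is uniformly distributed on $[0,1]$. Then for any Borel set $B$ of Lebesgue measure zero,
\[
P(\zeta+\theta\in B)=\int P(\zeta\in B-t)\,dF_\theta(t)=0 .
\]
Hence $\xi$ is absolutely continuous. The Jessen--Wintner dichotomy is not even needed here.

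The only nontrivial point is discovering the factorization in Step 1. Once guessed, it is a routine coefficient check. I expect the main care to be in verifying that every hypothesis, and in particular $p_1\ge p_0$, is exactly what makes the cofactor a nonnegative probability vector.
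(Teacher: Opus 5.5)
Your proposal is correct and is essentially the paper's proof. The paper also writes $\xi$ in law as $\tau+\eta$, where $\tau$ has i.i.d.\ uniform digits in $\{0,1,2,3\}$ and $\eta$ has i.i.d.\ digits in $\{0,1,2\}$ with weights $u,v,1-u-v$; by your coefficient matching these are exactly $4p_0$, $4(p_1-p_0)$, $4p_5$. It then uses the absolute continuity of a convolution with a uniform law, and your generating-polynomial factorization just makes explicit the consistency and nonnegativity check ($p_1\ge p_0$ giving $v\ge 0$) that the paper asserts without computation.
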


\begin{proof}
We prove that under the assumptions of the theorem, $\xi$ can be represented as the sum of two independent random variables $\tau$ and $\eta$, one of which has an absolutely continuous (in fact, uniform) distribution on the interval $[0,1]$. Then, by a well-known fact~\cite{p2}, the sum $\tau+\eta$ also has an absolutely continuous distribution.

Consider the random variable
$
\tau=\sum\limits_{n=1}^{\infty}4^{-n}\tau_n,
$
where $(\tau_n)$ is a sequence of independent random variables taking the values $0,1,2,3$ with probability $\frac{1}{4}$ each (as is well known, $\tau$ has the uniform distribution on $[0,1]$), and the random variable
$
\eta=\sum\limits_{n=1}^{\infty}4^{-n}\eta_n,
$
where $(\eta_n)$ is a sequence of independent random variables taking the values $0,1,2$ with probabilities $u$, $v$, and $1-u-v$, respectively (it has a singular distribution on $[0,1]$). Then their sum
$
\tau+\eta=\theta=\sum\limits_{n=1}^{\infty}4^{-n}\theta_n
$
is a random variable whose digits $\theta_n$ in the $\Delta$-representation take the values $0,1,2,3,4,5$ with probabilities
$
p_0=\frac{1}{4}u,
p_1=\frac{1}{4}u+\frac{1}{4}v,
p_2=\frac{1}{4}=p_3,
p_4=\frac{1}{4}-p_0,
p_5=\frac{1}{4}-p_1.
$
Since under the assumptions of the theorem this system is consistent (it has a nonnegative solution with respect to $u$ and $v$), we obtain that $\xi=\tau+\eta$ has an absolutely continuous distribution.
\end{proof}

\begin{corollary}
If $p_0=p_2=p_3=p_5=\frac{1}{4}$, then $\xi$ has an absolutely continuous distribution whose spectrum (the minimal closed support) is the Guthrie--Nymann Cantorval.
\end{corollary}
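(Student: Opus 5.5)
The plan is to deduce the corollary from the preceding theorem and then to identify the spectrum separately.

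\emph{Absolute continuity.} Take $p_0=p_2=p_3=p_5=\frac14$. Since the probabilities sum to one, $p_1=p_4=0$. Then $p_0+p_4=\frac14$ and $p_1+p_5=\frac14$. The remaining hypothesis of the theorem is $p_1\ge p_0$, and it fails here, since $p_1=0<\frac14=p_0$. So I will not quote the theorem verbatim; I will rerun its proof with the roles of the digits adjusted. In the decomposition $\theta=\tau+\eta$ with $\eta_n\in\{0,1,2\}$ taking values with probabilities $u,v,1-u-v$, the digit law of $\theta$ is
\[
p_k=\tfrac14\bigl(u\,[0\le k\le 3]+v\,[1\le k\le 4]+(1-u-v)\,[2\le k\le 5]\bigr).
\]
This gives $p_0=\frac u4$, $p_1=\frac{u+v}4$, $p_4=\frac{1-u}{4}$ and $p_5=\frac{1-u-v}4$. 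Requiring $p_0=\frac14$ and $p_1=0$ is inconsistent, so the decomposition with $\eta_n\in\{0,1,2\}$ is unavailable.

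I therefore change $\eta$ to have digits $\eta_n\in\{0,2\}$ with probability $\frac12$ each. Then $\tau_n+\eta_n$ is uniform on the multiset $\{0,1,2,3\}\cup\{2,3,4,5\}$, which gives $p_1=p_4=\frac18$, not $0$. This attempt also fails, and I expect this step to be the main obstacle.

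What I would do first is use the redundancy of the alphabet and move to the ordered series $\frac34+\frac24+\frac3{16}+\frac2{16}+\cdots$. With $p_0=p_2=p_3=p_5=\frac14$, the variable $\xi$ equals $\sum 4^{-n}(3\varepsilon_n+2\delta_n)$, where $\varepsilon_n,\delta_n$ are independent fair Bernoulli variables, since $(\varepsilon_n,\delta_n)\mapsto 3\varepsilon_n+2\delta_n$ is a bijection onto $\{0,2,3,5\}$. So $\xi$ is the symmetric infinite Bernoulli convolution generated by the Guthrie--Nymann series, and also the symmetric convolution $\sum 4^{-n}(3\varepsilon_n)+\sum 4^{-n}(2\delta_n)$.

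I would then look for a splitting $\xi=\tau+\eta'$ with $\tau$ uniform, working with the characteristic function
\[
f_\xi(t)=\prod_n \tfrac14\bigl(1+e^{2it/4^n}\bigr)\bigl(1+e^{3it/4^n}\bigr).
\]
The goal is to show that this factors as $\prod\frac14(1+e^{it/4^n}+e^{2it/4^n}+e^{3it/4^n})$ times a characteristic function, possibly after regrouping pairs of consecutive levels (base $16$) so that the digit distribution becomes a convolution with the uniform distribution on $\{0,\dots,15\}$. If that fails, I would fall back on showing $f_\xi\in L^2$ directly, or on a density argument via the self-similarity equation.

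\emph{Spectrum.} This part is routine. The distribution of $\xi$ gives positive mass to every cylinder $\Delta_{c_1\dots c_n}$ with $c_k\in\{0,2,3,5\}$, and to no point outside their closure. Hence its minimal closed support is
\[
\Bigl\{\sum 4^{-n}\alpha_n:\alpha_n\in\{0,2,3,5\}\Bigr\}=X,
\]
which is the Guthrie--Nymann Cantorval.
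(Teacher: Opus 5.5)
You are right that the preceding theorem cannot be invoked. Its decomposition gives $p_0=\frac u4$ and $p_1=\frac{u+v}4\ge p_0$, whereas here $p_1=0<\frac14=p_0$. So the corollary does not follow from that theorem, even though its placement in the paper suggests it does. But you never supply a replacement, so absolute continuity is left unproved. Moreover, your main planned route is not merely hard but impossible. There is no splitting $\xi=\tau+\eta'$ with $\tau$ uniform on $[0,1]$ and $\eta'$ independent, whether in base $4$ or after regrouping into base $16$ (regrouping does not change the law). The reason is as follows. Since $\xi$ is bounded, so is $\eta'$, and the spectrum of $\tau+\eta'$ would be $K+[0,1]$ with $K$ compact. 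Because $0\in X\subset[0,\frac53]$, this forces $[0,1]\subset X$. But $(\frac5{12},\frac12)$ is a gap of $X$: it separates $w_0(X)\subset[0,\frac5{12}]$ from $w_2(X)\subset[\frac12,\frac{11}{12}]$. This is exactly the ``only if'' half of the paper's general decomposition theorem. Your $L^2$ fallback is only named, not argued.

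The missing idea is a tile argument, which your last fallback points toward without its key input. Put $D=\{0,2,3,5\}$ and $w_d(x)=\frac{x+d}4$.
\begin{itemize}
\item The law $\mu$ of $\xi$ is the unique probability measure with $\mu=\frac14\sum_{d\in D}\mu\circ w_d^{-1}$.
\item $X=\bigcup_{d\in D}w_d(X)$ consists of exactly four pieces, each of ratio $\frac14$.
\item If $\lambda(X)>0$, then $\lambda(X)\le\sum_{d}\lambda(w_d(X))=\lambda(X)$, which forces the pieces to overlap only in null sets.
\item Consequently $\lambda|_X/\lambda(X)$ satisfies the same self-similarity equation, so by uniqueness it equals $\mu$, and $\mu$ is absolutely continuous.
\end{itemize}
The crux is $\lambda(X)>0$. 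It follows from the paper's later theorem $\lambda(\operatorname{int}E_s)=1$. It can also be seen directly. $D$ is a complete residue system modulo $4$, so the sums $\sum_{k=1}^n 4^{-k}d_k$ with $d_k\in D$, which lie in $X$, represent every class of $4^{-n}\mathbb{Z}/\mathbb{Z}$. Hence the closed set $X+\mathbb{Z}$ contains every $4^{-n}\mathbb{Z}$ and therefore equals $\mathbb{R}$. Then $\lambda(X)>0$ by countable subadditivity. Your identification of the spectrum is fine.
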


\begin{corollary}
If one of the following conditions holds:
$
p_0=0=p_1,p_0=0=p_5, p_4=0=p_5,
$
then $\xi$ has an absolutely continuous distribution.
\end{corollary}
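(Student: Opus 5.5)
The plan is to read the corollary as a specialisation of the preceding theorem. Its standing hypotheses are kept: $s=4$ and $p_2=p_3=p_0+p_4=p_1+p_5=\frac14$. Each of the three listed vanishing conditions will be shown to force the remaining hypothesis $p_1\ge p_0$. Without these standing hypotheses the statement would be false: for instance, $p_0=p_1=0$ together with $p_2=1$ gives a degenerate digit and hence a singular distribution. So the argument must rest on them.

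First, suppose $p_0=0=p_1$. Then $p_1\ge p_0$ holds trivially, and the normalisation $p_0+p_4=p_1+p_5=\frac14$ gives $p_4=p_5=\frac14$. Second, suppose $p_0=0=p_5$. Then $p_1=\frac14-p_5=\frac14\ge 0=p_0$, and $p_4=\frac14$. Third, suppose $p_4=0=p_5$. Then $p_0=p_1=\frac14$, so $p_1\ge p_0$ holds with equality. In all three cases the theorem applies directly, so $\xi$ has an absolutely continuous distribution.

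To make this transparent, I will also exhibit the decomposition $\xi=\tau+\eta$ from the proof of the theorem. The digit probabilities there are $p_0=\frac14u$ and $p_1=\frac14(u+v)$, with $u,v\ge 0$ and $u+v\le 1$. The three cases correspond to the following choices of the digit distribution of $\eta$:
\begin{itemize}
\item $u=v=0$, so $\eta_n\equiv 2$ and $\eta=\frac23$ is a constant;
\item $u=0$, $v=1$, so $\eta_n\equiv 1$ and $\eta=\frac13$ is a constant;
\item $u=1$, $v=0$, so $\eta_n\equiv 0$ and $\eta=0$.
\end{itemize}
In each case $\xi$ is a shift of the uniformly distributed $\tau$. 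Hence $\xi$ is uniform on an interval of length one, namely $[\frac23,\frac53]$, $[\frac13,\frac43]$ or $[0,1]$ respectively, which gives absolute continuity immediately.

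The only real obstacle is interpretive rather than computational. One must make explicit that the corollary inherits the theorem's equalities $p_2=p_3=p_0+p_4=p_1+p_5=\frac14$. Once that is granted, each case is a one-line check. As a consistency check on the supports, none of these three distributions has the Guthrie--Nymann Cantorval as its spectrum, since each is uniform on an interval.
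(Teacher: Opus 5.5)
Your proposal is correct and follows the intended derivation; the paper gives no separate proof, and the corollary is meant as a specialization of the preceding theorem. Under the inherited hypotheses $p_2=p_3=p_0+p_4=p_1+p_5=\frac14$, each vanishing pair forces $p_1\ge p_0$, so the theorem applies. Your further remark that $\eta$ is then constant, so that $\xi$ is uniform on $[\frac23,\frac53]$, $[\frac13,\frac43]$ or $[0,1]$, is correct and a more transparent way to see it. One cosmetic point: your counterexample $p_2=1$ is excluded by the paper's standing assumption $p_i<1$. Taking $p_0=p_1=0$, $p_2=p_3=\frac12$ makes the same point, since it gives a Cantor-type law on a set of dimension $\frac12$.
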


The above results can easily be extended to the case of an arbitrary integer $s>3$.

\begin{theorem}
The random variable $\xi$ can be represented as the sum of two independent random variables, one of which is uniformly distributed on the interval $[0,1]$, if and only if
\begin{equation}\label{um}
p_1\ge p_0
\quad \text{and} \quad
p_2=p_3=\dots=p_{s-2}=p_{s-1}=p_0+p_s=p_1+p_{s+1}=\frac{1}{s}.
\end{equation}
\end{theorem}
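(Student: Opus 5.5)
Sufficiency follows the $s=4$ case verbatim, and necessity comes from reading the digit law off characteristic functions.

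\textbf{Sufficiency.} Take $\tau=\sum_{n\ge1}s^{-n}\tau_n$ with $\tau_n$ i.i.d.\ uniform on $\{0,1,\dots,s-1\}$, so that $\tau$ is uniform on $[0,1]$. Take $\eta=\sum_{n\ge1}s^{-n}\eta_n$ independent of $\tau$, with $\eta_n$ i.i.d.\ on $\{0,1,2\}$ with probabilities $u,v,1-u-v$. Then $\theta_n=\tau_n+\eta_n$ are i.i.d.\ with values $0,\dots,s+1$ and
\[
P(\theta_n=0)=\tfrac{u}{s},\quad P(\theta_n=1)=\tfrac{u+v}{s},\quad P(\theta_n=k)=\tfrac1s\ (2\le k\le s-1),
\]
\[
P(\theta_n=s)=\tfrac{v+(1-u-v)}{s}=\tfrac1s-p_0,\quad P(\theta_n=s+1)=\tfrac{1-u-v}{s}=\tfrac1s-p_1 .
\]
Given \eqref{um}, set $u=sp_0\ge0$ and $v=s(p_1-p_0)\ge0$. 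Then $1-u-v=1-sp_1=sp_{s+1}\ge0$, so the system is consistent. Hence $\theta\overset{d}{=}\xi$, and $\xi=\tau+\eta$ in distribution.

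\textbf{Necessity.} Suppose $\xi\overset{d}{=}\tau+\zeta$ with $\tau$ uniform on $[0,1]$ independent of $\zeta$. I would work with characteristic functions. Write $f(t)=\sum_k p_ke^{ikt}$ for the characteristic function of the digit, so that
\[
\varphi_\xi(t)=\prod_{n\ge1}f(t/s^n),\qquad \varphi_\tau(t)=\prod_{n\ge1}g(t/s^n),\quad g(t)=\tfrac1s\sum_{k=0}^{s-1}e^{ikt}.
\]
Then $\varphi_\xi=\varphi_\tau\varphi_\zeta$, so $\varphi_\xi$ vanishes wherever $\varphi_\tau$ does, namely at $t=2\pi j$ with $j\ne0$, and to at least the same order.

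\textbf{Main obstacle: extracting the polynomial divisibility.} The aim is to show that $g$ divides $f$ as polynomials in $z=e^{it}$. The zeros of $g$ are the $s$-th roots of unity $z\ne1$. The plan is to use self-similarity: $\xi=s^{-1}(\xi_1+\xi')$ with $\xi'\overset{d}{=}\xi$, and $\tau$ likewise. I expect the cleanest route to be a density argument rather than zero counting. The density of $\tau+\zeta$ is $F_\zeta(x)-F_\zeta(x-1)$, and comparing it with the self-similarity of $\xi$ should force $\zeta$ itself to be self-similar, $\zeta=s^{-1}(\zeta_1+\zeta')$. That gives $f=g\cdot h$ with $h$ a probability generating polynomial with nonnegative coefficients and $\deg h\le 2$. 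Whether this step works is the key uncertainty.

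Once $f=gh$ is established, $h(z)=u+vz+wz^2$ with $u,v,w\ge0$, and expanding $gh$ gives $p_0=u/s$, $p_1=(u+v)/s$ and $p_k=1/s$ for $2\le k\le s-1$ (these middle coefficients need $s\ge4$). It also gives $p_s=(v+w)/s$ and $p_{s+1}=w/s$. Hence $p_0+p_s=p_1+p_{s+1}=1/s$ and $p_1-p_0=v/s\ge0$, which is \eqref{um}.
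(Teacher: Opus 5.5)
Your sufficiency argument is the same as the paper's. The paper's proof in fact covers only the ``if'' direction and gives no argument for necessity.

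\textbf{The gap is in necessity.} The step you flag as uncertain is the whole content of that direction. The ``density comparison'' meant to make $\zeta$ self-similar with a genuine probability digit law $h$ is not carried out, and assuming $h$ has nonnegative coefficients is essentially assuming \eqref{um}. Two different facts are bundled together here:
\begin{itemize}
\item[(a)] $g\mid f$ as polynomials in $z=e^{it}$, which gives all the equalities in \eqref{um};
\item[(b)] the quotient $h=u+vz+wz^2$ has $v\ge 0$, which is exactly $p_1\ge p_0$. (Note that $u=sp_0\ge0$ and $w=sp_{s+1}\ge0$ are automatic.)
\end{itemize}
Fact (b) cannot come from zeros or orders of vanishing. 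Take $p_1=p_s=0$ and all other $p_k=\frac1s$. Then $f=g\cdot(1-z+z^2)$, so $\varphi_\xi$ vanishes at every $2\pi j$, $j\ne0$, just as $\varphi_\tau$ does, yet $p_1<p_0$. Positivity of the law of $\zeta$ must be used somewhere.

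\textbf{How to close (a).} Zero counting works, and this is where $s\ge4$ enters. Since $\sum_n|f(t/s^n)-1|<\infty$, the equation $\varphi_\xi(2\pi j)=\prod_{n\ge1}f(2\pi j/s^n)=0$ forces $f(2\pi j/s^n)=0$ for some $n\ge1$. For $s\nmid j$, the point $e^{2\pi i j/s^n}$ depends only on the pair $(n,\,j \bmod s^n)$, and distinct pairs give distinct points. Suppose some $e^{2\pi i a/s}$ with $1\le a\le s-1$ were not a root of $f$. Then each of the $s$ classes $a+bs \bmod s^2$ would need its own root of $f$ at some level $n\ge2$. Each of the other $s-2$ classes mod $s$ needs at least one root as well. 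So $f$ would have at least $2s-2>s+1\ge\deg f$ distinct roots, which is impossible. Hence $g\mid f$.

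\textbf{How to close (b).} Write $f=gh$ and use $\varphi_\xi(t)=f(t/s)\varphi_\xi(t/s)$ and $\varphi_\tau(t)=g(t/s)\varphi_\tau(t/s)$. This gives $\varphi_\zeta(t)=h(t/s)\varphi_\zeta(t/s)$ wherever $\varphi_\tau(t)\ne0$, hence everywhere by continuity. By uniqueness of Fourier--Stieltjes transforms of finite signed measures, for every Borel set $B$,
\[
\mu_\zeta(B)=u\,\mu_\zeta(sB)+v\,\mu_\zeta(sB-1)+w\,\mu_\zeta(sB-2).
\]
Since $[0,1]+\mathrm{supp}\,\mu_\zeta\subseteq[0,\frac{s+1}{s-1}]$, we get $\mathrm{supp}\,\mu_\zeta\subseteq[0,\frac{2}{s-1}]$, and $\frac{2}{s-1}<1$. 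Take $B=[\frac1s,\frac1s+\frac{2}{s(s-1)}]$. The first and third terms vanish and the second equals $v$. Thus $v=\mu_\zeta(B)\ge0$, i.e.\ $p_1\ge p_0$. In the example above $v=-1$, so that $\xi$ is indeed not of the form $\tau+\zeta$.
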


\begin{proof}
Let us consider a random variable
$
\tau=\sum\limits_{n=1}^{\infty}s^{-n}\tau_n,
$
where $(\tau_n)$ is a sequence of independent random variables taking the values $0,1,\dots,s-1$ with probability $\frac{1}{s}$ each (as is well known, $\tau$ has the uniform distribution on $[0,1]$), and the random variable
$
\eta=\sum\limits_{n=1}^{\infty}s^{-n}\eta_n,
$
where $(\eta_n)$ is a sequence of independent random variables taking the values $0,1,2$ with probabilities $u$, $v$, and $1-u-v$, respectively (it has a singular Cantor-type distribution on $[0,1]$). Then their sum
$
\tau+\eta=\theta=\sum\limits_{n=1}^{\infty}s^{-n}\theta_n
$
is a random variable whose digits $\theta_n$ in the $\Delta$-representation take the values $0,1,\dots,s,s+1$ with probabilities
$
p_0=\frac{1}{s}u,
p_1=\frac{1}{s}u+\frac{1}{s}v,
p_2=\frac{1}{s}=p_3=\dots=p_{s-1},
p_s=\frac{1}{s}-p_0,
p_{s+1}=\frac{1}{s}-p_1.
$
Since under the assumptions of the theorem this system is consistent (it has a solution with respect to $u$ and $v$), we obtain that $\xi=\tau+\eta$ has an absolutely continuous distribution.
\end{proof}

\section{Conditions for singularity of the distribution: the method of characteristic functions}

To obtain sufficient conditions for the singularity of the distribution of $\xi$, we use the method of characteristic functions.
Recall that the \emph{characteristic function} of the distribution of a random variable $\zeta$ with distribution function $F_{\zeta}(x)$ is the complex-valued function $f_{\zeta}(t)$ defined as the expectation of the random variable $e^{it\zeta}$, i.e.,
$$
f_{\zeta}(t)=Me^{it\zeta}=\int\limits_{-\infty}^{+\infty}e^{itx}\,dF_{\zeta}(x).
$$

It is known~\cite{p2} that for an absolutely continuous distribution of a random variable $\zeta$ with characteristic function $f_{\zeta}(t)$, the number
$
L_{\zeta}\equiv\lim\limits_{|t|\to\infty}\sup |f_{\zeta}(t)|
$
is equal to $0$; for a discrete distribution, $L_{\zeta}=1$; and for a singular distribution $L_{\zeta}\in [0,1]$. For a singular distribution, the value of $L_{\zeta}$ characterizes its closeness to either an absolutely continuous or a discrete distribution. For a continuous random variable $\zeta$ with a pure Lebesgue type distribution, the condition $L_{\zeta}\neq 0$ implies singularity. This is the essence of the method of characteristic functions for proving singularity of a distribution.

Since $(\xi_n)$ is a sequence of independent random variables and
$
\xi=\frac{\xi_1}{s}+\frac{\hat{\xi}}{s},
$
where
$
\hat{\xi}=\frac{\xi_2}{s}+\frac{\xi_3}{s^2}+\dots,
$
we conclude that $\xi_1$ and $\hat{\xi}$ are independent, and $\hat{\xi}$ has the same distribution as $\xi$. Thus, for the characteristic functions of $\xi$ and $\hat{\xi}$ we have
$
f_{\xi}(t)=f_{\hat{\xi}}(t).
$
By definition, we get
$$
f_{\xi}(t)=Me^{it\xi}
=Me^{it\sum\limits_{k=1}^{\infty}s^{-k}\xi_k}
=Me^{it\frac{\xi_1}{s}}\cdot Me^{i\frac{t}{s}\hat{\xi}}
=\varphi_1(t)f_{\xi}\!\left(\frac{t}{s}\right),
$$
where
$$
\varphi_1(t)=Me^{it\frac{\xi_1}{s}}
=p_0+p_1e^{i\frac{t}{s}}+p_2e^{i\frac{t}{s}\cdot 2}+\dots+p_{s+1}e^{i\frac{t}{s}(s+1)}.
$$
Therefore, if $\varphi_1(t_0)=1$, then
$
f_{\xi}(t_0)=f_{\xi}\!\left(\frac{t_0}{s}\right).
$

Since $\varphi_1(2\pi s^n)=1$, it follows that
$$
f_{\xi}(2\pi)=f_{\xi}(2\pi s)= \dots = f_{\xi}(2\pi s^{n-1}) = f_{\xi}(2\pi s^n), \qquad n\in\mathbb{N}.
$$
Now we have proved the following statement.

\begin{lemma}
The characteristic function $f_{\xi}(t)$ of the random variable ~ ~ ~ ~ ~ $\xi=\Delta_{\xi_1\xi_2\dots\xi_n\dots}$ admits the representation
\[
f_{\xi}(t)=\lim\limits_{m\to\infty}\prod_{k=1}^{m}\varphi_k(t)=\prod_{k=1}^{\infty}\varphi_k(t),
\qquad
\text{where }
\varphi_k(t)=\sum\limits_{m=0}^{s+1}p_me^{i\frac{t}{s^k}m},
\]
and for every $n\in \mathbb{N}$ satisfies the relation
$
f_{\xi}(2\pi)=f_{\xi}(2\pi s^n).
$
\end{lemma}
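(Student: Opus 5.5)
The plan has two parts, matching the two claims of the lemma: first the infinite product, then the invariance along $2\pi s^n$.

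\emph{Product formula.} Write $\xi=\lim_{m\to\infty}S_m$, where $S_m=\sum_{k=1}^{m}s^{-k}\xi_k$. The terms $s^{-k}\xi_k$ are independent, so the characteristic function of $S_m$ factors as
\[
f_{S_m}(t)=\prod_{k=1}^{m}Me^{it\xi_k/s^k}=\prod_{k=1}^{m}\varphi_k(t),
\qquad \varphi_k(t)=\sum_{j=0}^{s+1}p_je^{i\frac{t}{s^k}j}.
\]
Since $0\le\xi-S_m\le (s+1)/\bigl(s^m(s-1)\bigr)$, the convergence $S_m\to\xi$ is uniform, hence almost sure, hence in distribution. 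By the continuity theorem for characteristic functions (or directly, by dominated convergence applied to $e^{itS_m}\to e^{it\xi}$ with the bound $|e^{itS_m}|\le1$), we get $f_{S_m}(t)\to f_\xi(t)$ for every $t$. This gives $f_\xi(t)=\lim_{m\to\infty}\prod_{k=1}^{m}\varphi_k(t)$, which is what $\prod_{k=1}^\infty\varphi_k(t)$ means. The only delicate point is that a limit which exists is meant as the partial-product limit, not as absolute convergence of the product. Absolute convergence also holds whenever no factor vanishes, because $|1-\varphi_k(t)|\le\sum_j p_j |t| j s^{-k}$ is summable; this remark is not needed.

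\emph{Relation $f_\xi(2\pi)=f_\xi(2\pi s^n)$.} I will iterate the self-similarity identity $f_\xi(t)=\varphi_1(t)f_\xi(t/s)$ derived just before the lemma. Equivalently, split the product: $f_\xi(2\pi s^n)=\prod_{k=1}^{n}\varphi_k(2\pi s^n)\cdot\prod_{k>n}\varphi_k(2\pi s^n)$.

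For $k\le n$, the argument is $\frac{2\pi s^n}{s^k}j=2\pi s^{n-k}j\in2\pi\mathbb{Z}$. Hence each exponential equals $1$, and $\varphi_k(2\pi s^n)=\sum_j p_j=1$.

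For $k>n$, $\varphi_k(2\pi s^n)=\varphi_{k-n}(2\pi)$, since $2\pi s^n/s^k=2\pi/s^{k-n}$. So the tail product reindexes to $\prod_{k\ge1}\varphi_k(2\pi)=f_\xi(2\pi)$. To do this rigorously with limits, apply the reindexing to the partial products and pass to the limit $m\to\infty$.

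I expect no real obstacle. The main care point is justifying the interchange of the limit of partial products with the factorization, and that is handled by independence together with dominated convergence as above.
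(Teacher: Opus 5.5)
Your proof is correct and follows essentially the paper's route. The paper iterates $f_\xi(t)=\varphi_1(t)f_\xi(t/s)$ using $\varphi_1(2\pi s^{j})=1$ for $j\ge 1$, and your split of the product at $2\pi s^n$ into $n$ unit factors and a reindexed tail is exactly that $n$-fold iteration; your partial-sum and dominated-convergence argument also justifies the product formula itself, which the paper only asserts.
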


\begin{corollary}
The following inequality holds
$$
L_{\xi}\equiv \lim\limits_{|t|\to \infty}\sup|f_{\xi}(t)|\geq |f_{\xi}(2\pi)|.
$$
\end{corollary}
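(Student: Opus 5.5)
The corollary follows directly from the preceding lemma by evaluating $f_\xi$ along the sequence $t_n = 2\pi s^n$, which tends to infinity.

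By the lemma, $f_\xi(2\pi s^n)=f_\xi(2\pi)$ for every $n\in\mathbb N$. Alternatively, one can check this from the functional equation $f_\xi(t)=\varphi_1(t)f_\xi(t/s)$, since $\varphi_1(2\pi s^{n})=\sum_{m=0}^{s+1}p_m e^{2\pi i s^{n-1}m}=\sum p_m=1$. Consequently $|f_\xi(t_n)|=|f_\xi(2\pi)|$ for all $n$.

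Because $s\ge 4$, we have $t_n\to+\infty$. For every $T>0$ there is therefore some $n$ with $t_n>T$, which gives
\[
\sup_{|t|>T}|f_\xi(t)|\ \ge\ |f_\xi(t_n)| = |f_\xi(2\pi)|.
\]
The left-hand side is nonincreasing in $T$ and bounded below by $|f_\xi(2\pi)|$, so its limit as $T\to\infty$ exists and is at least $|f_\xi(2\pi)|$. That limit is exactly $L_\xi=\limsup_{|t|\to\infty}|f_\xi(t)|$, and the inequality follows.

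There is no real obstacle here. The only point that needs care is reading "$\lim\sup$" as the upper limit of $|f_\xi(t)|$ as $|t|\to\infty$, which the monotone-supremum formulation above makes explicit. The substantive work, namely the identity $f_\xi(2\pi)=f_\xi(2\pi s^n)$, is already provided by the lemma.
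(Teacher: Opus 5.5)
Your proposal is correct and follows the paper's argument: the corollary is read off from the lemma's identity $f_\xi(2\pi)=f_\xi(2\pi s^n)$, which comes from $f_\xi(t)=\varphi_1(t)f_\xi(t/s)$ and $\varphi_1(2\pi s^n)=1$ for $n\ge 1$, evaluated along the sequence $2\pi s^n\to\infty$. Your monotone-supremum justification of the upper limit spells out the step the paper leaves implicit.
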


\begin{theorem}\label{th4}
If $m=1$, that is, $s=4$, and at least one of the conditions
$
p_2\neq p_0+p_4
$
or
$
p_3\neq p_1+p_5
$
holds, then $\xi$ has a singular distribution on $\left[0,\frac{5}{3}\right]$.
\end{theorem}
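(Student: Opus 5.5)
The plan is to use the method of characteristic functions together with the preceding Corollary, $L_{\xi}\ge |f_{\xi}(2\pi)|$. It therefore suffices to show two things:
\begin{enumerate}
\item $f_{\xi}(2\pi)\neq 0$ under the stated hypothesis;
\item the distribution of $\xi$ is continuous, i.e. has no atoms.
\end{enumerate}
By the Jessen--Wintner theorem the distribution is then of pure type, and it is not discrete. It cannot be absolutely continuous, since for an absolutely continuous distribution $L_{\xi}=0$. Hence it is singular, and it is supported in $E_\xi=[0,\frac{5}{3}]$.

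Continuity is routine. Since every $p_i<1$, we have $\max_j p_j=:p^*<1$. For any fixed point $x$, the event $\{\xi=x\}$ is contained in the union, over the admissible digit sequences, of the events "the first $n$ digits equal a prescribed string". There are boundedly many such strings at each level for a given $x$, because digits are bounded and any two representations differ only in a controlled way. So the probability is at most $C\,(p^*)^n\to 0$. Alternatively, one may simply cite the standard fact that an infinite convolution of nondegenerate discrete distributions with these contraction ratios is continuous.

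The core is the nonvanishing of $f_{\xi}(2\pi)=\prod_{k\ge1}\varphi_k(2\pi)$, which I split into the first factor and the tail.

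\emph{The first factor.} For $k=1$, $s=4$, we have $e^{i\frac{2\pi}{4}j}=i^j$, so
$$\varphi_1(2\pi)=(p_0+p_4-p_2)+i\,(p_1+p_5-p_3).$$
This is nonzero exactly when $p_2\ne p_0+p_4$ or $p_3\ne p_1+p_5$, which is the hypothesis of Theorem~\ref{th4}.

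\emph{The factors with $k\ge2$.} Here the angles $\theta_j=2\pi j/4^k$, $j=0,\dots,5$, lie in $[0,5\pi/8]$, an arc of length less than $\pi$. After multiplying by $e^{-i5\pi/16}$ all points $e^{i\theta_j}$ have arguments in $[-5\pi/16,5\pi/16]$ and hence positive real part. A convex combination of them (the weights $p_j$ are nonnegative and sum to $1$) therefore has positive real part, so $\varphi_k(2\pi)\ne0$.

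\emph{Convergence of the product.} We have $|1-\varphi_k(2\pi)|\le \sum_j p_j|1-e^{i2\pi j/4^k}|\le 10\pi\cdot 4^{-k}$, which is summable. Hence the infinite product converges to a nonzero limit, since all its factors are nonzero.

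Combining, $f_{\xi}(2\pi)\neq0$. Since $f_{\xi}(2\pi 4^n)=f_{\xi}(2\pi)$ along $t_n=2\pi4^n\to\infty$, we get $L_{\xi}>0$, and singularity follows. The only delicate step is the nonvanishing of the infinite product: it needs both the half-plane argument for each factor with $k\ge2$ and the summability estimate. I expect this to go through without difficulty.
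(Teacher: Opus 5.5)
Your proof is correct and follows the paper's route. The ingredients are: $\varphi_1(2\pi)=(p_0-p_2+p_4)+i(p_1-p_3+p_5)\neq0$ by hypothesis; the tail $\prod_{k\ge2}\varphi_k(2\pi)$ converges because $|1-\varphi_k(2\pi)|\le 10\pi\cdot 4^{-k}$; and $f_\xi(2\pi4^n)=f_\xi(2\pi)$ gives $L_\xi>0$. Your half-plane argument for $\varphi_k(2\pi)\neq0$, $k\ge2$, and your continuity step fill in points the paper only asserts or leaves implicit.

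One correction concerns the continuity step. The claim that a point has boundedly many admissible digit prefixes is false. For $x=\Delta_{(04)}$, the interchangeable blocks $04$ and $10$ give $2^n$ admissible prefixes of length $2n$, so your union bound yields only $2^n(p^*)^{2n}$, which need not tend to $0$. Drop that argument and use your fallback instead, L\'evy's theorem: the sum of a convergent series of independent discrete random variables $X_k=\xi_k4^{-k}$ is continuous, since $\prod_k\max_xP(X_k=x)=\lim_n(p^*)^n=0$.
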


\begin{proof}
We apply the method of characteristic functions.
For $s=4$ we have
\[
\varphi_k(t)=\left(p_0+\sum\limits_{m=1}^{5}p_m\cos\frac{t}{4^k}m\right)+i\sum\limits_{m=1}^{5}p_m\sin\frac{t}{4^k}m,
\]
and $\varphi_k(2\pi)\neq 0$ for $k>1$, while
$
\varphi_1(2\pi)=(p_0-p_2+p_4)+i(p_1-p_3+p_5).
$
Thus, $\varphi_1(2\pi)=0$ is equivalent to 
$
p_2=p_0+p_4,
p_3=p_1+p_5.
$
Next we will show that if these conditions fail, then $|f_{\xi}(2\pi)|\neq 0$, and hence $L_{\xi}>0$.

Consider the product
\begin{equation}\label{prod1}
\prod\limits_{k=2}^{\infty}\varphi_k(2\pi).
\end{equation}
Its convergence is equivalent to the convergence of the series
\begin{equation}\label{sum1}
\sum\limits_{k=2}^{\infty}|\varphi_k(2\pi)-1|.
\end{equation}
Let us estimate a general term of this series
\begin{align*}
|\varphi_k(2\pi)-1|
&=\left[\left(p_0-1+\sum\limits_{j=1}^{5}p_j\cos\frac{2\pi}{4^k}j\right)^2+
\left(\sum\limits_{j=1}^{5}p_j\sin\frac{2\pi}{4^k}j\right)^2\right]^{\frac{1}{2}} \leq \\
&\leq
2\left(\sum\limits_{j=0}^{5}p_j\sin^2\frac{\pi j}{4^k}-\sum\limits_{r\neq j}^{5}p_rp_j\sin^2\frac{\pi(r-j)}{4^k}\right)^{\frac{1}{2}} \leq \\
&\leq
2\left(\sum\limits_{j=0}^{5}p_j\sin^2\frac{\pi j}{4^k}\right)^{\frac{1}{2}}
\leq
2\left(\sum\limits_{j=0}^{5}p_j\sin^2\frac{5\pi}{4^k}\right)^{\frac{1}{2}} \leq \\
&\leq
2\sin\frac{5\pi}{4^k}
\leq
2\pi\frac{5}{4^k}.
\end{align*}
Note that the last two inequalities hold for all $k>1$. Since the geometric series with general term $2\pi\frac{5}{4^k}$ converges, it follows from the comparison test that the series~\eqref{sum1} also converges, and hence the product~\eqref{prod1} converges as well. Thus, under the assumptions of the theorem we have
$
|f_{\xi}(2\pi)|>0
$
and
$
L_\xi>0.
$
Therefore, $\xi$ has a singular distribution.
\end{proof}


\begin{corollary}
If $p_1=p_4=0$ and the assumption of the theorem is fulfilled, then $\xi$ has a singular distribution on the Guthrie--Nymann Cantorval.
\end{corollary}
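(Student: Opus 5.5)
The corollary follows by combining Theorem~\ref{th4} with the earlier observation about the spectrum of $\xi$. There are two claims to justify: the distribution of $\xi$ is singular, and its spectrum is the Guthrie--Nymann Cantorval $X$.

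\textbf{Singularity.} This is immediate from Theorem~\ref{th4}. The hypothesis of that theorem is that $p_2\neq p_0+p_4$ or $p_3\neq p_1+p_5$. With $p_1=p_4=0$ these conditions read $p_2\neq p_0$ or $p_3\neq p_5$. The theorem then gives $\varphi_1(2\pi)\neq 0$. The tail product $\prod_{k\ge2}\varphi_k(2\pi)$ converges to a nonzero limit, since each factor is nonzero and $\sum_k|\varphi_k(2\pi)-1|<\infty$. Hence $L_\xi\ge|f_\xi(2\pi)|>0$. Since $\xi$ is of pure type by the Jessen--Wintner theorem and its distribution is continuous, it must be singular. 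One point is worth stating explicitly: $\xi$ is nondegenerate, because every $p_i<1$, and continuity follows from the standard criterion for sums with independent digits, $\prod_n\max_i p_i=0$. This is what rules out the discrete case.

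\textbf{Spectrum.} With $p_1=p_4=0$, each digit $\xi_n$ takes values in $\{0,2,3,5\}$, and every one of these values has positive probability. I will check this, since it is needed below. If one of $p_0,p_2,p_3,p_5$ were zero, the spectrum would be a proper subset of $X$. In that case I state the corollary for the spectrum as the set of sums with digits from $\{i: p_i>0\}$, or I assume positivity, as is implicit in the phrase ``singular distribution on the Guthrie--Nymann Cantorval.''

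Assuming positivity, the spectrum of a random variable with independent identically distributed digits is the set of all $\sum 4^{-n}\alpha_n$ with every $\alpha_n$ in the support of the digit distribution. Indeed, each cylinder $\Delta_{c_1\dots c_k}$ with admissible digits has positive probability $p_{c_1}\cdots p_{c_k}$. Its diameter tends to $0$, so every such sum is a growth point of $F_\xi$. Conversely, this set is compact, being a continuous image of $\{0,2,3,5\}^{\mathbb N}$, and it carries full measure. The set is exactly $X$, which is a Cantorval by Guthrie--Nymann.

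\textbf{Main obstacle.} The only delicate point is the positivity of $p_0,p_2,p_3,p_5$ in the spectrum identification. The singularity part is a direct specialization of Theorem~\ref{th4}.
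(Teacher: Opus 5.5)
Your proposal is correct and follows the paper's own one-line argument. Singularity is a direct specialization of Theorem~\ref{th4}, and the spectrum is the set of sums with digits in $\{0,2,3,5\}$, i.e.\ the Guthrie--Nymann Cantorval, which you additionally usefully note requires $p_0p_2p_3p_5\neq 0$ (stated explicitly in the paper only in the general-$s$ corollary), as well as continuity via $\prod_n \max_i p_i = 0$ to exclude the discrete case.
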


Indeed, the theorem implies the singularity of the distribution of $\xi$, while its spectrum is the Guthrie--Nymann Cantorval.

\begin{theorem}
If $q_0\neq \frac{1}{2}$, then the infinite Bernoulli convolution generated by the Guthrie--Nymann series has a purely singular distribution.
\end{theorem}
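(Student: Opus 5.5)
The plan is to reduce the statement to Theorem~\ref{th4}. The Bernoulli convolution generated by the Guthrie--Nymann series is
$$
\eta=\sum_{n=1}^{\infty}\frac{3\eta_{2n-1}+2\eta_{2n}}{4^n}=\sum_{n=1}^\infty \frac{\xi_n}{4^n},\qquad \xi_n:=3\eta_{2n-1}+2\eta_{2n}.
$$
The digits $\xi_n$ are independent and identically distributed, since they are built from disjoint pairs of the i.i.d.\ sequence $(\eta_k)$. Each $\xi_n$ takes values in $\{0,2,3,5\}\subset A$ with
$$
p_0=q_0^2,\quad p_2=q_0q_1,\quad p_3=q_1q_0,\quad p_5=q_1^2,\quad p_1=p_4=0.
$$
Hence $\eta$ is exactly a random variable $\xi$ of Section~3 with $s=4$ and these digit probabilities. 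We also have $p_i<1$ for every $i$, because $q_0,q_1>0$.

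Next I check the hypothesis of Theorem~\ref{th4}. Singularity follows if $p_2\neq p_0+p_4$ or $p_3\neq p_1+p_5$, that is, if $q_0q_1\ne q_0^2$ or $q_0q_1\neq q_1^2$. Dividing by $q_0>0$, the first inequality is $q_1\ne q_0$, i.e.\ $q_0\ne\frac12$; the second similarly reduces to $q_0\neq q_1$. So whenever $q_0\neq\frac12$ both conditions fail to be equalities. Theorem~\ref{th4} then gives $|f_\xi(2\pi)|>0$ and hence $L_\eta>0$.

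To conclude \emph{purely} singular, I would note two further facts. First, $\eta$ has a pure distribution by the Jessen--Wintner theorem. Second, the distribution has no atoms: each digit satisfies $\max_j p_j<1$, so the probability of any single point is bounded by $\prod_n \max_j p_j = 0$. A continuous pure distribution with $L>0$ cannot be absolutely continuous, by the Riemann--Lebesgue lemma, so it is singular. This is precisely what Theorem~\ref{th4} already asserts, so the whole proof is the regrouping together with the arithmetic check above.

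The only step requiring care is justifying the regrouping. Since the series has positive terms, the rearrangement into blocks $\frac{3\eta_{2n-1}+2\eta_{2n}}{4^n}$ is legitimate. The independence of the $\xi_n$ follows from the disjointness of the index pairs $\{2n-1,2n\}$. I expect no genuine obstacle beyond this.
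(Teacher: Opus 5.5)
Your proof is correct and follows the paper's argument. Both regroup into digits $3\eta_{2n-1}+2\eta_{2n}\in\{0,2,3,5\}$ with probabilities $q_0^2,q_0q_1,q_0q_1,q_1^2$ (and $p_1=p_4=0$), and then apply Theorem~\ref{th4} via $q_0q_1\neq q_0^2\iff q_0\neq\frac12$.

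One caveat concerns your non-atomicity remark, which the main argument does not need. The product $\prod_n\max_j p_j$ bounds the probability of a single digit sequence, not of a point, and points here can have several representations (e.g.\ $\frac34=\Delta_{3(0)}=\Delta_{2(3)}$). Non-atomicity should instead come from L\'evy's theorem. Alternatively, at most two rank-$n$ cylinders with digits in $\{0,2,3,5\}$ contain a given point, which gives $P(\xi=x)\le 2(\max_j p_j)^n\to 0$.
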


\begin{proof}
Consider the distribution of the random variable
$$
\zeta=\frac{3\zeta_1}{4}+\frac{2\zeta_2}{4}+\frac{3\zeta_3}{4^2}+\frac{2\zeta_4}{4^2}+\dots+\frac{3\zeta_{2k-1}}{4^k}+\frac{2\zeta_{2k}}{4^k}+\dots,
$$
where $(\zeta_n)$ is a sequence of independent random variables taking the values $0$ and $1$ with probabilities $q_0$ and $q_1$, respectively. Then the random variable
$
\eta_k=3\zeta_{2k-1}+2\zeta_{2k}
$
takes the values $0,2,3,5$ with probabilities $q_0^2$, $q_0q_1$, $q_0q_1$, and $q_1^2$, respectively. The independence of $(\zeta_n)$ implies the independence of the sequence $(\eta_k)$. Therefore, the distribution of $\zeta$ is in fact equivalent to the distribution of $\xi$ under the conditions
$
p_0=q_0^2,
p_2=q_0q_1,
p_3=q_0q_1,
p_5=q_1^2,
p_1=0=p_4.
$

Since the assumption of Theorem~\ref{th4} is fulfilled, i.e.,
$
q_0q_1=p_2\neq p_0=q_0^2
$
or
$
q_0q_1=p_3\neq p_5=q_1^2,
$
we conclude that the distribution of $\zeta$ is singular whenever $q_0\neq \frac{1}{2}$ ($q_1=1-q_0\neq \frac{1}{2}$).
\end{proof}

\section{The general case}


\begin{theorem}
Let $m>1$ and $s=2m+2$. If at least one of the following conditions 
$$
u\equiv (p_0+p_s-p_{\frac{s}{2}})+(p_1+p_{s+1}+p_{s-1})\cos\frac{2\pi}{s}+
\sum\limits_{j=2}^{m}(p_j+p_{s-j})\cos\frac{\pi j}{m+1}\neq 0,
$$
$$
v\equiv (p_1+p_{s+1}-p_{s-1})\sin\frac{\pi}{m+1}+
\sum\limits_{j=2}^{m}(p_j-p_{s-j})\sin\frac{\pi j}{m+1}\neq 0,
$$
is fulfilled, then
$
L_{\xi}\geq |f_\xi(2\pi)|>0,
$
and hence the distribution of $\xi$ is singular.
On the other hand, if $u=0=v$, then $|f_\xi(2\pi)|=0$, and the question about the type of the distribution of $\xi$ remains open.
\end{theorem}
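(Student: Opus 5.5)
The plan is to follow the scheme of the proof of Theorem~\ref{th4}, using the Lemma giving $f_\xi(t)=\prod_{k\ge1}\varphi_k(t)$ and $f_\xi(2\pi)=f_\xi(2\pi s^n)$. First I identify $u$ and $v$ with the real and imaginary parts of $\varphi_1(2\pi)=\sum_{j=0}^{s+1}p_j e^{2\pi i j/s}$. Since $s=2m+2$, we have $2\pi j/s=\pi j/(m+1)$. Group the exponents by their position on the unit circle:
\begin{itemize}
\item $j=0$ and $j=s$ both give $1$;
\item $j=s/2=m+1$ gives $-1$;
\item $j=1$ and $j=s+1$ give $e^{2\pi i/s}$, while $j=s-1$ gives its conjugate $e^{-2\pi i/s}$;
\item for $2\le j\le m$, the pair $j$ and $s-j$ gives conjugate points $e^{\pm i\pi j/(m+1)}$.
\end{itemize}
Collecting cosines and sines then yields exactly $\operatorname{Re}\varphi_1(2\pi)=u$ and $\operatorname{Im}\varphi_1(2\pi)=v$. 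Hence $\varphi_1(2\pi)\ne0$ if and only if $(u,v)\ne(0,0)$. This also settles the last sentence of the theorem: if $u=v=0$, then one factor of the product vanishes, so $f_\xi(2\pi)=0$.

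Next I show that $\prod_{k\ge2}\varphi_k(2\pi)$ converges to a nonzero limit. Two facts are needed.

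\emph{No factor vanishes.} For $k\ge2$ the arguments $\theta_{j,k}=2\pi j/s^k$, $0\le j\le s+1$, lie in $[0,2\pi(s+1)/s^2]$. Because $(s+1)/s^2\le 5/16<1/2$ for $s\ge4$, this interval is contained in $[0,\pi)$. So every $e^{i\theta_{j,k}}$ with $j\ge1$ lies in the open upper half-plane, and $\varphi_k(2\pi)$ is a convex combination of $1$ and such points. If $p_0<1$, some $p_j>0$ with $j\ge1$, and then $\operatorname{Im}\varphi_k(2\pi)>0$; if $p_0=1$ the factor equals $1$. In either case $\varphi_k(2\pi)\ne0$.

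\emph{Summability.} We have
\[
|\varphi_k(2\pi)-1|\le\sum_j p_j\,|e^{i\theta_{j,k}}-1|\le\sum_j p_j\,\theta_{j,k}\le \frac{2\pi(s+1)}{s^k}.
\]
The right-hand side is a summable geometric sequence. An absolutely convergent infinite product with no zero factors has a nonzero limit, so $\prod_{k\ge2}\varphi_k(2\pi)\ne0$.

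Combining the two steps, whenever $u\ne0$ or $v\ne0$ we get $|f_\xi(2\pi)|=|\varphi_1(2\pi)|\cdot\bigl|\prod_{k\ge2}\varphi_k(2\pi)\bigr|>0$. The Corollary to the Lemma then gives $L_\xi\ge|f_\xi(2\pi)|>0$.

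It remains to exclude the discrete case. Since every $p_i<1$, the digits are nondegenerate i.i.d. variables, so $\xi$ has no atoms: each point has probability at most $\prod_n \max_i p_i=0$. By the Jessen--Wintner theorem the distribution of $\xi$ is therefore purely continuous and of pure type. An absolutely continuous distribution has $L_\xi=0$, so $L_\xi>0$ forces it to be singular.

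The only delicate point is the non-vanishing of the factors $\varphi_k(2\pi)$ for $k\ge2$, which the proof of Theorem~\ref{th4} only asserts. The half-plane argument above handles it, and it uses $s\ge4$.
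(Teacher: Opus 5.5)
Your main line is the paper's proof. You write $\varphi_1(2\pi)=u+iv$ by grouping the digits $j$ according to the point $e^{2\pi ij/s}$, and you use that $\prod_{k\ge2}\varphi_k(2\pi)$ converges to a nonzero limit, so that $f_\xi(2\pi)=0$ exactly when $u=v=0$. The paper only asserts that the tail product is nonzero. Your half-plane argument proves it: for $k\ge2$ all angles $2\pi j/s^k$ lie in $[0,\pi)$, so $\mathrm{Im}\,\varphi_k(2\pi)>0$. Together with the bound $|\varphi_k(2\pi)-1|\le 2\pi(s+1)/s^k$, this is a genuine improvement in rigor.

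The one step that does not work as written is the exclusion of atoms. The bound $P(\xi=x)\le\prod_n\max_i p_i$ is the probability bound for a \emph{single} digit sequence. In this redundant system, however, the event $\{\xi=x\}$ is in general a large, often uncountable, set of digit sequences, because the pairs $(a,s+j)$ and $(a+1,j)$ are interchangeable. Already with two digits, $P(\xi_1/s+\xi_2/s^2=1/s)=p_1p_0+p_0p_s$, which can exceed $(\max_i p_i)^2$.

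The conclusion is nevertheless true, and there are two ways to justify it. You can cite L\'evy's theorem: a convergent sum of independent discrete variables $X_n=\xi_n/s^n$ is continuous if and only if $\prod_n\max_x P(X_n=x)=0$. Alternatively, argue directly. Suppose $M=\sup_x P(\xi=x)>0$; it is attained at some $x_0$, since only finitely many atoms exceed $M/2$. Write $\xi=\xi_1/s+\hat\xi/s$ with $\hat\xi$ independent of $\xi_1$ and distributed as $\xi$; this gives $M=P(\xi=x_0)=\sum_i p_iP(\xi=sx_0-i)$. Hence $P(\xi=sx_0-i)=M$ whenever $p_i>0$. Take two digits $a\ne b$ with $p_a,p_b>0$ and iterate. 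This produces $2^n$ distinct points $s^nx_0-\sum_{k=1}^n c_ks^{n-k}$, $c_k\in\{a,b\}$, each of mass $M$, which is absurd.

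Note also that the paper does not argue this point at all. It takes the dichotomy ``purely singular or purely absolutely continuous'' for $\xi$ directly from the Jessen--Wintner theorem. So the only thing actually used is that $L_\xi>0$ excludes absolute continuity.
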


\begin{proof}
Since the infinite product
$
\prod\limits_{k=2}^{\infty}\varphi_k(2\pi)
$
converges, we have $f_\xi(2\pi)=0$ if and only if $\varphi_1(2\pi)=0$. However,
$$
\varphi_1(2\pi)=p_0+\sum\limits_{j=1}^{s+1}{p_j}\cos\frac{2\pi j}{s}
+i \sum\limits_{j=1}^{s+1}{p_j}\sin\frac{2\pi j}{s}
=u+iv,
$$
where
$$
u=p_0+\sum\limits_{j=1}^{s+1}{p_j}\cos\frac{\pi j}{m+1}
=$$
$$=(p_0+p_s-p_{\frac{s}{2}})+(p_1+p_{s+1}+p_{s-1})\cos\frac{\pi}{m+1}+
\sum\limits_{j=2}^{m}(p_j+p_{s-j})\cos\frac{\pi j}{m+1},
$$
$$
v=\sum\limits_{j=1}^{s+1}{p_j}\sin\frac{2\pi j}{s}
=(p_1+p_{s+1}-p_{s-1})\sin\frac{\pi}{m+1}+
\sum\limits_{j=2}^{m}(p_j-p_{s-j})\sin\frac{\pi j}{m+1},
$$
because
\begin{eqnarray*}
\cos\frac{\pi (s-j)}{m+1}=\cos\left(2\pi-\frac{j\pi}{m+1}\right)=\cos\frac{j\pi}{m+1},\\
\sin\frac{\pi (s-j)}{m+1}=\sin\left(2\pi-\frac{j\pi}{m+1}\right)=-\sin\frac{j\pi}{m+1}.
\end{eqnarray*}
Thus, if $u\neq 0$ or $v\neq 0$, we have $\varphi_1(2\pi)\neq 0$ and
$
L_{\xi}\geq |f_\xi(2\pi)|>0.
$

If, however, $u=0=v$, then $\varphi_1(2\pi)=0$, and no conclusion about $L_{\xi}$ can be drawn.
\end{proof}

\begin{corollary}
A necessary condition for the absolute continuity of the distribution of $\xi$ is 
$
u=0=v.
$
\end{corollary}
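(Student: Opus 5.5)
The corollary is the contrapositive of the preceding theorem, combined with the Riemann--Lebesgue property of absolutely continuous distributions recalled at the start of this section. The plan is to assume the distribution of $\xi$ is absolutely continuous and suppose, for contradiction, that $u\neq 0$ or $v\neq 0$.

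First, by the preceding theorem, $\varphi_1(2\pi)=u+iv\neq 0$. Since $\prod_{k\ge 2}\varphi_k(2\pi)$ converges, the earlier lemma gives $|f_\xi(2\pi)|>0$. Convergence here means to a nonzero limit, with no factor $\varphi_k(2\pi)$, $k\ge2$, vanishing. The paper justified this via the bound $|\varphi_k(2\pi)-1|\le 2\pi(s+1)/s^k$, which is summable. It also yields $|\varphi_k(2\pi)-1|<1$ for large $k$, and for small $k\ge2$ one checks $\varphi_k(2\pi)\neq0$ directly.

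Second, by the lemma, $f_\xi(2\pi s^n)=f_\xi(2\pi)$ for all $n\in\mathbb N$. Since $2\pi s^n\to\infty$, the corollary to the lemma gives $L_\xi\ge|f_\xi(2\pi)|>0$.

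Third, for an absolutely continuous distribution $L_\xi=0$ by the Riemann--Lebesgue lemma, as cited from \cite{p2}. This contradicts the second step, hence $u=0=v$.

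The only delicate point is the nonvanishing of the tail product $\prod_{k\ge2}\varphi_k(2\pi)$ for general even $s$, that is, checking $\varphi_k(2\pi)\ne0$ for the finitely many small $k\ge2$. I would handle it by noting that for $k\ge2$ all arguments $2\pi j/s^k$ with $j\le s+1$ lie in $[0,2\pi(s+1)/s^2]\subset[0,\pi/2)$ for $s\ge4$. Consequently all the terms of $\varphi_k(2\pi)$ lie in a closed quarter-plane, so its real part is nonnegative. It is zero only if all mass sits on angle exactly $\pi/2$, which is impossible; so $\operatorname{Re}\varphi_k(2\pi)>0$.
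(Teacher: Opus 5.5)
Your proposal is correct and is essentially the paper's argument. The corollary is the contrapositive of the preceding theorem together with $L_\zeta=0$ for absolutely continuous laws, and your direct check that $\varphi_k(2\pi)\neq 0$ for $k\ge 2$ supplies a step the paper only asserts.

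One numerical slip: $2\pi(s+1)/s^2<\pi/2$ holds for $s\ge 5$, so it covers the theorem's whole range $s=2m+2\ge 6$. It fails at $s=4$, where it equals $5\pi/8$. In that case you can instead use that all angles of $\varphi_2(2\pi)$ lie in $[0,\pi)$, so $\operatorname{Im}\varphi_2(2\pi)>0$ because $p_0<1$.
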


\begin{corollary}
If $p_1=0=p_s$, $p_0p_2p_3\dots p_{s-1}p_{s+1}\ne 0$, and the assumptions of the theorem are satisfied, then the random variable $\xi$ has a singular distribution whose spectrum is a Cantorval.
\end{corollary}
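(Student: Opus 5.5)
The corollary combines two facts: the preceding theorem gives singularity, and the spectrum is a Cantorval.

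\emph{Singularity.} Under the hypotheses of the theorem, $u\neq 0$ or $v\neq 0$. The theorem then gives $L_\xi\ge |f_\xi(2\pi)|>0$, and with it the singularity of the distribution. The theorem allows $p_1=0=p_s$, so nothing further is needed here. It is worth recalling why the conclusion is genuinely singularity rather than merely non-absolute continuity. By the Jessen--Wintner theorem the distribution of $\xi$ is of pure type. It cannot be discrete, since the same infinite product argument shows that no atom exists: each digit distribution has $\max_i p_i<1$, so the Lévy criterion for infinite convolutions of discrete distributions applies. Hence $L_\xi>0$ excludes absolute continuity, and the distribution is purely singular.

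\emph{Spectrum.} I would identify the spectrum of $\xi$ with the set
\[
\Bigl\{\sum_n s^{-n}\alpha_n : \alpha_n\in A_0\Bigr\},
\]
where $A_0=A\setminus\{1,s\}$ as defined in Section~2. Since $p_0p_2\cdots p_{s-1}p_{s+1}\neq 0$ and $p_1=p_s=0$, the set of digits with positive probability is exactly $A_0$. The standard argument then applies. Each cylinder $\Delta_{c_1\dots c_n}$ with $c_i\in A_0$ has positive probability $\prod p_{c_i}$. The distribution is concentrated on the closed set above, which is compact as a continuous image of $A_0^{\mathbb N}$. So every point of this set is a growth point, and no point outside it is.

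\emph{Cantorval.} Next I would observe, as the paper already notes in Section~3, that this set coincides with the set of subsums of the series
\[
\tfrac{3}{s}+\tfrac{2}{s}+\dots+\tfrac{2}{s}+\tfrac{3}{s^2}+\dots,
\]
with $m$ twos in each block. The sums $3\varepsilon_0+2(\varepsilon_1+\dots+\varepsilon_m)$ with $\varepsilon_i\in\{0,1\}$ give exactly the values $\{0,2,3,\dots,2m+1,2m+3\}$, because $s=2m+2$. This is exactly $A_0$. For $m\ge 1$ this set of subsums is a Cantorval: the case $m=1$ is the Guthrie--Nymann Cantorval, and the general case is the result cited from \cite{Banakiewicz}.

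The main obstacle is this last step, if it has to be justified rather than cited. I would proceed via the Guthrie--Nymann criterion. One direction shows the set contains an interval, for instance around the midpoint, by a self-similarity/interval-covering argument. The other shows it is not a finite union of intervals, by exhibiting gaps of every small scale: the missing digit values $1$ and $s$ produce gaps near $\Delta_{c_1\dots c_n}$ plus $s^{-n-1}$. By Theorem~\ref{GN-Theorem}, a set with both properties must be an M-Cantorval. I would rely on the citation for this step.
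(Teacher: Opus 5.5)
Your proposal is correct and follows the paper's route. In the paper the corollary is simply the preceding theorem, which gives $L_\xi\ge|f_\xi(2\pi)|>0$, combined with identifying the spectrum with $E_s$, the set of subsums of the multigeometric series $\frac{3}{s}+\frac{2}{s}+\dots+\frac{2}{s}+\frac{3}{s^2}+\dots$, which is a Cantorval by \cite{Banakiewicz}. Your extra remarks fill in steps the paper leaves implicit: continuity via L\'evy's criterion to exclude a discrete law, and the growth-point argument for the spectrum (there, because cylinders overlap, you only get $P(\xi\in\Delta_{c_1\dots c_n})\ge\prod p_{c_i}$, which is all you need).
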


\section{Fractal properties of the boundary of the spectrum}

Let us consider the set
$
E_{s}=\left\{ x: x=\sum_{n=1}^{\infty} {\varepsilon_n}{s^{-n}}, \ (\varepsilon_n) \in L_0\right\},
$
which is the spectrum of the random variable $\xi$ under the conditions $p_1=0=p_s$ and $p_0p_2p_3\cdots p_{s-1}p_{s+1}\ne 0$.
The topological and metric properties of the set $E_s$, for $s=2m+2$, $m \in \mathbb{N}$, were studied in~\cite{Banakiewicz}. It was proved there that $E_s$ is a Cantorval and its Lebesgue measure is equal to $1$. This measure coincides with the total length of the intervals whose union is the interior of $E_s$.

Below we extend these results to the case of arbitrary $ s \geq 4$, $s \in \mathbb{N}$, and determine the fractal dimension of the boundary of the set $E_s$.

It is easy to see that $E_s$ is the attractor of an iterated function system (IFS) consisting of $s$ similarities
$\displaystyle w_i(x)=\frac{i}{s}+\frac{x}{s}$ for $i \in A_0$ (see \cite{BSS}), and hence
$$
E_s=\bigcup\limits_{i\in A_0}w_i(E_s)=W(E_s).
$$

\begin{lemma}
\label{Property IFS}
The following properties of $W=\{ w_0,w_2,w_3,\dots,w_{s-1},w_{s+1} \}$ are easy to verify:
\begin{itemize}
\item The interval $H\equiv\left[ 0, \frac{s+1}{s-1}\right]$ is the minimal interval containing $E_s$;
\item $\min w_0(E_s)=0$, \ $\max w_0(E_s)=\frac{1}{s} \cdot \frac{s+1}{s-1}$;
\item $\min w_i(E_s)=\frac{i}{s}$, \ $\max w_i(E_s)=\frac{i}{s}+\frac{1}{s} \cdot \frac{s+1}{s-1}$ for all $2 \leq i \leq s-1$;
\item $\min w_{s+1}(E_s)=\frac{s+1}{s}$, \ $\max w_{s+1}(E_s)=\frac{s+1}{s} + \frac{1}{s} \cdot \frac{s+1}{s-1}$.
\end{itemize}
\end{lemma}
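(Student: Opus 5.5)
The plan is to compute $\min E_s$ and $\max E_s$ first. Everything else follows from the fact that each $w_i$ is an increasing affine map. Every $x\in E_s$ has the form $x=\sum_{n\ge1}\varepsilon_n s^{-n}$ with $\varepsilon_n\in A_0$, and the digits of $A_0$ satisfy $0\le\varepsilon_n\le s+1$. Hence
\[
0\le x\le (s+1)\sum_{n\ge1}s^{-n}=\frac{s+1}{s-1}.
\]
Both bounds are attained. The point $0=\Delta_{(0)}$ uses only the digit $0\in A_0$. The point $\frac{s+1}{s-1}=\Delta_{([s+1])}$ uses only the digit $s+1\in A_0$; note that $s+1\in A_0$ because only $1$ and $s$ are removed from $A$. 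So $\min E_s=0$ and $\max E_s=\frac{s+1}{s-1}$. Since $E_s$ is compact and contains both endpoints, $H=\left[0,\frac{s+1}{s-1}\right]$ is the minimal closed interval containing $E_s$. This proves the first item.

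For the remaining items, I would observe that each $w_i(x)=\frac{i}{s}+\frac{x}{s}$ is strictly increasing and continuous. Therefore
\[
\min w_i(E_s)=w_i(\min E_s)=\frac{i}{s},\qquad \max w_i(E_s)=w_i(\max E_s)=\frac{i}{s}+\frac1s\cdot\frac{s+1}{s-1}.
\]
Specializing this to $i=0$, to $2\le i\le s-1$, and to $i=s+1$ gives the second, third and fourth items. These index ranges are exactly the elements of $A_0$, which is why the maps are listed as $w_0,w_2,\dots,w_{s-1},w_{s+1}$.

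There is no real obstacle here. The only point requiring care is to confirm that the extreme sequences $(0,0,\dots)$ and $(s+1,s+1,\dots)$ belong to $L_0$. This holds because the extreme digits $0$ and $s+1$ are not among the removed digits $1$ and $s$, so the bounds are sharp and not merely upper and lower estimates.
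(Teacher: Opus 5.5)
Your proof is correct and is exactly the routine verification the paper has in mind (the paper gives no proof, only stating that these properties are easy to verify): you bound the digits of $A_0$ between $0$ and $s+1$, check that the extreme sequences $(0,0,\dots)$ and $(s+1,s+1,\dots)$ lie in $L_0$, and push $\min E_s$ and $\max E_s$ through the increasing maps $w_i$. One small remark: you do not need compactness for the minimality of $H$, since any interval containing both $0$ and $\frac{s+1}{s-1}$ already contains $H$ by convexity.
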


Note that the distances between cylinders satisfy
\[
\rho(\Delta_0,\Delta_2)=\rho(\Delta_{[s-1]},\Delta_{[s+1]})=\frac{s-3}{s(s-1)}>0.
\]

\begin{theorem}
The interval
$
I=\left[\frac{2}{s-1}, 1\right]
$
is contained in the set $E_s$ and is the longest interval entirely contained in $E_s$. Moreover,
$
\frac{2}{s-1}=\Delta_{(2)}=\Delta_{(2)}^s,
1=\Delta_{(s-1)}=\Delta_{(s-1)}^s.
$
\end{theorem}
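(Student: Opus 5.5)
The plan has two parts: first show $I\subset E_s$, then show that no interval contained in $E_s$ is longer than $I$. The endpoint identities are immediate: $\Delta_{(2)}=2\sum_{n\ge1}s^{-n}=\frac{2}{s-1}$ and $\Delta_{(s-1)}=(s-1)\sum_{n\ge1}s^{-n}=1$, and both use only digits from $A_0$.

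\emph{Inclusion $I\subset E_s$.} I would not look for a set $T$ with $J\subset\bigcup_{i\in A_0}w_i(T)$. The natural candidate $T=I$ fails: for $2\le i\le s-2$ the right end of $w_i(I)$ is $\frac{i+1}{s}$, while the left end of $w_{i+1}(I)$ is $\frac{i+1}{s}+\frac{2}{s(s-1)}$, so gaps remain. Instead I would build a digit expansion directly, using a carry argument.

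\begin{itemize}
\item Take $x\in I$ and put $y=x-\frac{2}{s-1}\in\left[0,\frac{s-3}{s-1}\right]\subset[0,1]$.
\item Expand $y=\sum d_n s^{-n}$ in ordinary base $s$ with $d_n\in\{0,\dots,s-1\}$. Then $x=\sum (d_n+2)s^{-n}$ with digits $\beta_n=d_n+2\in\{2,\dots,s+1\}$.
\item The only forbidden digit is $s$, which occurs when $d_n=s-2$. Remove each occurrence using the value-preserving rewriting rules of Section~2, mainly $(a,s)\to(a+1,0)$, together with its variant $(a,s+1)\to(a+1,1)$ when a $1$ is created.
\item To control the rewriting, process maximal blocks of consecutive digits equal to $s$ (and the digit to the left of each block) from left to right.
\end{itemize}

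The point of the hypothesis $x\le 1$, equivalently $y\le\frac{s-3}{s-1}$, is that the first digit never needs to become $s$ or larger after carrying. The point of $x\ge\frac{2}{s-1}$ is that the leftmost digit to be incremented is at least $2$.

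I would organize this as an induction on $n$, with the following invariant: after the first $n$ steps we have a prefix $\alpha_1\dots\alpha_n\in A_0^n$ and a remainder $r_n=s^n(x-\Delta_{\alpha_1\dots\alpha_n(0)})$ lying in a fixed finite union $T$ of intervals. I expect $T=I\cup[1,\frac{s+1}{s-1}]\cup[0,\frac{2}{s-1}]$ with constraints on the next digit, and that all these pieces are reachable within $W(E_s)$. Finding the right invariant set $T$, so that every case of the digit following a block of $s$'s closes up, is the main obstacle.

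\emph{Maximality.} Lemma~\ref{Property IFS} and the gaps $\rho(\Delta_0,\Delta_2)=\rho(\Delta_{[s-1]},\Delta_{[s+1]})>0$ show that $E_s\cap[0,\frac{2}{s-1})$ and $E_s\cap(1,\frac{s+1}{s-1}]$ are not intervals. To see that $I$ cannot be extended, I would imitate the uniqueness argument given in Section~2 for $\Delta_{(23)}$. Take $x=\frac{2}{s-1}-\varepsilon$ with $\varepsilon$ small and suppose $x\in E_s$. Comparing with $\Delta_{(2)}$, the first digit must be $0$ or $2$. The digit $0$ is impossible because $x>\max w_0(E_s)$. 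So the digit is $2$, and $s x-2=\frac{2}{s-1}-s\varepsilon$ is again just below $\frac{2}{s-1}$. Iterating, the error grows by the factor $s$ at each step, so after finitely many steps the point lands in the gap between $w_0(E_s)$ and $w_2(E_s)$. Hence there are points arbitrarily close to $\frac{2}{s-1}$ from the left that are not in $E_s$.

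The same argument applies near $1$ by the symmetry $x\mapsto\frac{s+1}{s-1}-x$. This symmetry maps $E_s$ to itself because $A_0$ is symmetric under $i\mapsto s+1-i$, and it fixes $I$. Together these show that $I$ is a maximal interval in $E_s$.

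Finally, to show $I$ is the longest interval, I would use self-similarity. Any interval in $E_s$ of length greater than $|I|=\frac{s-3}{s-1}$ must meet the region outside $I$, which is covered by $w_0(E_s)$, $w_{s+1}(E_s)$ and their gap structure. An interval there has length at most $\frac{1}{s}\cdot\frac{s+1}{s-1}<\frac{s-3}{s-1}$ for $s\ge5$; the case $s=4$ needs a direct check.
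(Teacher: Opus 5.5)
The inclusion $I\subset E_s$, which is the substantive half of the theorem, is not proved. Your carry scheme is never specified, and it has three concrete problems. First, the one auxiliary rule you name, $(a,s+1)\to(a+1,1)$, reintroduces the forbidden digit $1$. Second, carries landing on a digit $s-1$ or $s+1$, and runs of consecutive $s$'s, are not treated. Third, the invariant you guess, $I\cup[1,\frac{s+1}{s-1}]\cup[0,\frac{2}{s-1}]$, is as a set the whole hull $H$, which is not contained in $E_s$, so it cannot be what is preserved.

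For comparison, the paper starts from the ordinary base-$s$ expansion of $x\in[\frac{3}{s},1]$, where the only inadmissible digit is $1$. It removes the $1$'s by borrowing substitutions such as $(a,1)\to(a-1,s+1)$, $(1,1)\to(0,s+1)$ and $(a,0,\dots,0,1)\to(a-1,s-1,\dots,s-1,s+1)$. It then gets the rest of $I$ from the symmetry $h$, using $[\frac{s+1}{2(s-1)},1]\subset[\frac{3}{s},1]$. Your scheme is the mirror image of this under $h$: the digit $s$ of $x=\frac{2}{s-1}+y$ corresponds to the digit $1$ of $h(x)=1-y$. Its rules should therefore be $(a,s)\to(a+1,0)$, $(s,s)\to(s+1,0)$ and $(a,s+1,\dots,s+1,s)\to(a+1,2,\dots,2,0)$, with the same bookkeeping burden.

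If you want to keep your invariant approach, the missing idea is that the invariant is a disjunction, not a set. Prove by induction on $n$ that $I\subset W^n(H)$ and that for every $t\in[0,\frac{2}{s-1}]$ at least one of $t$, $1+t$ lies in $W^n(H)$.
\begin{itemize}
\item The gaps $(\frac{j}{s},\frac{j}{s}+\frac{2}{s(s-1)})$, $3\le j\le s-1$, of $\bigcup_{a=2}^{s-1}w_a(I)$ are covered by $w_j(t)$ or $w_{j-1}(1+t)$.
\item The disjunction for $t$ follows via $w_0,w_2,w_{s-1},w_{s+1}$. If $st\in[0,\frac{2}{s-1}]$, use the disjunction at $st$. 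If $st\in[\frac{2}{s-1},1]$, use $st\in I$. If $st\in(1,\frac{s+1}{s-1}]$, use the disjunction at $st-1$. If $st\in(\frac{s+1}{s-1},\frac{2s}{s-1}]$, use $st-1\in I$ or the disjunction at $st-2$.
\end{itemize}
Since $\bigcap_n W^n(H)=E_s$, this gives $I\subset E_s$ without any symmetry argument.

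Your maximality argument uses the paper's mechanism: gaps of $E_s$ inside the cylinders $\Delta_{2\dots20}$ accumulating at $\frac{2}{s-1}$. As phrased, however, it proves too much. For a generic small $\varepsilon$ the iterates $\frac{2}{s-1}-s^k\varepsilon$ eventually fall into $w_0(E_s)$, not into the gap, and indeed points of $w_2^k w_0(E_s)$ come arbitrarily close to $\frac{2}{s-1}$. You must take $s^k\varepsilon\in(\frac{2}{s(s-1)},\frac{1}{s})$, i.e. $x\in w_2^k\bigl((\frac{s+1}{s(s-1)},\frac{2}{s})\bigr)$. Then the first $k$ digits are forced to be $2$, and the $k$-th remainder lies in the gap between $w_0(E_s)$ and $w_2(E_s)$.

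The paper's proof stops at non-extendability of $I$, so your extra step for ``longest'' goes beyond it and is sound in outline, with two corrections. First, $E_s\cap[0,\frac{2}{s-1})$ also contains $E_s\cap[\frac{2}{s},\frac{2}{s-1})\subset w_2(E_s)$; this is harmless, since that piece has length $\frac{2}{s(s-1)}$. Second, the case $s=4$ follows by scaling: an interval of $E_s$ inside $w_0(E_s)$ is $\frac{1}{s}$ times an interval of $E_s$, so the supremum $L$ of lengths satisfies $L\le\max\{|I|,L/s\}$, hence $L=|I|$.
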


\begin{proof}
Recall that every number $x \in [0, 1]$ has a classical base-$s$ expansion
\begin{equation}
\label{s-expansion}
x=\Delta_{a_1 a_2 \dots a_n \dots}^{s}=\frac{a_1}{s}+\frac{a_2}{s^2}+\dots+\frac{a_n}{s^n}+\dots, \qquad a_n \in \{ 0, 1, 2, \dots, s-1\}.
\end{equation}

We first show that the interval $\left[ \frac{3}{s},1\right]$ is entirely contained in $E_s$. Let we have an arbitrary
$x \in \left[ \frac{3}{s},1\right]$. Then it has a base-$s$ expansion
$
x=\Delta_{3 a_2 \ldots a_n \dots}^{s}.
$
Next we show that there exists a sequence $(b_n) \in L_0$ such that
$$
x=\Delta_{3 a_2 \dots a_n \dots}^{s}
=\frac{b_1}{s}+\frac{b_2}{s^2}+\dots+\frac{b_n}{s^n}+\dots
=\Delta_{b_1 b_2 \dots b_n \dots}.
$$

If the sequence $(a_n)$ contains no digit $1$, then we simply set $b_1=3$ and $b_n=a_n$ for $n>1$. In the case where the sequence $(a_n)$ contains some digits equal to $1$, we describe alternative replacements of digit blocks that produce another representation of the same number using only digits from the alphabet $A_0$. We begin with the case of replacing the period $(1)$ in the representation of a number. It is clear that
$
x=\Delta_{3 a_2 \dots a_m (1)}^{s}=\Delta_{3a_2\dots a_m(0[s+1])}.
$

From now on, we assume that the representation of $x$ does not end with the period $(1)$.

1. If $a_2=1$, then the following substitutions eliminate digit 1 in the representation:
\[
\Delta^s_{311\ldots1a_{2k+2}a_{2k+3}\ldots}=
\Delta_{30[s+1]0[s+1]\ldots0[s+1]a_{2k+2}a_{2k+3}\ldots},
\]
\[
\Delta^s_{311\ldots1a_{2k+1}a_{2k+2}\ldots}
=
\Delta_{2[s+1]0[s+1]0[s+1]\ldots0[s+1]a_{2k+1}a_{2k+2}\ldots}.
\]

2. If $a_2\neq 1$, then we have
\[
\Delta^s_{301a_4a_5\ldots}=\Delta_{2[s-1][s+1]a_4a_5\ldots},
\qquad
\Delta^s_{3a_21a_4a_5\ldots}=\Delta_{3[a_2-1][s+1]a_4a_5\ldots},
\quad \text{if } a_2\neq 0.
\]

The following substitutions eliminate the first occurrence of the digit $1$ in the representation:

$
\Delta^s_{3a_2\ldots a_m1a_{m+2}a_{m+3}\ldots}
=
\Delta_{3a_2\ldots a_{m-1}[a_m-1][s+1]a_{m+2}a_{m+3}\ldots};
$

$
\Delta^s_{3a_2\ldots a_m01a_{m+3}a_{m+4}\ldots}
=
\Delta_{3a_2\ldots a_{m-1}[a_m-1][s-1][s+1]a_{m+3}a_{m+4}\ldots},
~ a_m\neq 0;
$

$
\Delta^s_{3a_2\ldots a_k0\ldots01a_{k+1+p}a_{k+2+p}\ldots}
=
\Delta_{3a_2\ldots a_{k-1}[a_k-1][s-1]\ldots[s-1][s+1]a_{k+1+p}\ldots},
a_k\neq 0;
$

$
\Delta^s_{30\ldots 01a_{k+3}a_{k+4}\ldots}
=
\Delta_{2[s-1][s-1]\ldots[s-1][s+1]a_{k+3}a_{k+4}\ldots}.
$

After eliminating the first block with $1$, the second block becomes the first one, and we proceed by the same algorithm. Continuing inductively, we conclude that for every
$x \in \left[ \frac{3}{s},1\right]$ there exists a representation that does not use the digits $1$ and $s$. Therefore,
$
\left[ \frac{3}{s}, 1\right] \subset E_s.
$

For $s \geq 4$ the inequality
$
\frac{3}{s} < \frac{s+1}{2(s-1)} < 1
$
holds, and hence
$
\left[ \frac{s+1}{2(s-1)}, 1\right] \subset \left[ \frac{3}{s}, 1\right] \subset E_s.
$
Since $E_s$ is symmetric with respect to its midpoint $\frac{s+1}{2(s-1)}$, it follows that
\[
\left[ \frac{s+1}{2(s-1)} - \left( 1 - \frac{s+1}{2(s-1)}\right), 1\right]
=
\left[ \frac{2}{s-1}, 1\right]
\subset E_s.
\]

We now prove that this interval is maximal among all intervals contained in $E_s$. To do so, we show that every left neighborhood
$(\Delta_{(2)}^s -\varepsilon; \Delta_{(2)}^s)$
of the point $\Delta_{(2)}^s$ contains an interval disjoint from $E_s$. Since
$
\Delta_0=\left[0,\frac{1}{s(s-1)}\right],
\Delta_2=\left[\frac{2}{s},\frac{2}{s}+\frac{1}{s(s-1)}\right],
$
and $\Delta_0\cap\Delta_2=\varnothing$, we obtain
$
\left(\frac{1}{s(s-1)},\frac{1}{s}\right)\cap E_s=\varnothing.
$

Since
$
\Delta_{\underbrace{2...20}_k}=\left[a_k,a_k+\frac{1}{s^k(s-1)}\right],
$
where
$
a_k=\sum\limits_{i=1}^{k-1}\frac{2}{s^i},
$
and
$
\Delta_{\underbrace{2...20}_k}\cap\Delta_{\underbrace{2...20}_{k+1}}=\varnothing,
$
we get
$
\left(a_k+\frac{1}{s^k(s-1)},a_k+\frac{1}{s^k}\right)\cap E_s=\varnothing.
$
Since $a_k\to \frac{2}{s-1}$ as $k\to\infty$, it is easy to choose $k(\varepsilon)$ such that the gap
$
\left(a_k+\frac{1}{s^k(s-1)},a_k+\frac{1}{s^k}\right)
$
of $E_s$ lies entirely inside the prescribed left neighborhood of the point $\frac{2}{s-1}$. By symmetry, an analogous situation occurs to the right of $1$, namely there exist gaps symmetric to the above ones with respect to the point $\frac{s+1}{2(s-1)}$.
\end{proof}

\begin{corollary}
The following equality holds:
\[
I=\Delta_3\cup\Delta_4\cup\cdots\cup\Delta_{[s-2]}\cup\left[\bigcup\limits_{k=1}^{\infty}
\Delta_{\underbrace{2\ldots2}_k3}\right]\cup\left[\bigcup\limits_{k=2}^{\infty}\Delta_{\underbrace{[s-1]\ldots[s-1]}_k[s-2]}\right].
\]
\end{corollary}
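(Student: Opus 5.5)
The plan is to treat every cylinder on the right-hand side as the closed interval described in Section~2: the cylinder with base $c_1\dots c_r$ is $\bigl[a_r,\,a_r+\frac{s+1}{s^r(s-1)}\bigr]$ with $a_r=\sum_{k\le r}c_k s^{-k}$. The equality then becomes a bookkeeping statement about explicit intervals. I will prove two inclusions: every cylinder listed lies in $I=\bigl[\frac{2}{s-1},1\bigr]$, and the listed cylinders chain together without gaps so that they cover $I$. Since every listed cylinder has digits in $A_0$, each is contained in $E_s$ by the IFS description $E_s=W(E_s)$ from Lemma~\ref{Property IFS}. So the equality is also consistent with the preceding theorem, which shows $I$ is the maximal interval in $E_s$.

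\emph{Step 1 (inclusion in $I$).} For the middle block $\Delta_3,\dots,\Delta_{[s-2]}$, the smallest left endpoint is $\frac3s>\frac2{s-1}$, using $s>3$. The largest right endpoint is
\[
\frac{s-2}{s}+\frac{s+1}{s(s-1)}=\frac{s^2-2s+3}{s(s-1)}\le 1 .
\]
For $\Delta_{2\dots23}$ with $k$ twos, the left endpoint is $\sum_{i=1}^k 2s^{-i}+3s^{-k-1}$. It exceeds $\frac{2}{s-1}=\sum_{i\ge1}2s^{-i}$ exactly when $\frac{3}{s}>\frac{2}{s-1}$, that is, when $s>3$. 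Its right endpoint is below $1$ by a similar tail comparison. The family $\Delta_{[s-1]\dots[s-1][s-2]}$ is the mirror image under $x\mapsto\frac{s+1}{s-1}-x$, which maps digit $c$ to $s+1-c$: it sends $2\mapsto s-1$ and $3\mapsto s-2$. So this family can be handled by symmetry, using the comparison
\[
(s-2)(s-1)+(s+1)=s^2-2s+3\le s(s-1).
\]

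\emph{Step 2 (covering).} All cylinders of rank $r$ have the same length $\frac{s+1}{s^r(s-1)}$, which exceeds the spacing $s^{-r}$ between consecutive bases. Hence $\Delta_i\cap\Delta_{i+1}\ne\varnothing$, and the middle block is the single interval $\bigl[\frac3s,\frac{s^2-2s+3}{s(s-1)}\bigr]$. For the left family I will check that the right endpoint of $\Delta_{2^{k+1}3}$ is at least the left endpoint of $\Delta_{2^k3}$. This reduces to an inequality in $s$ alone after multiplying by $s^{k+2}(s-1)$. I also need $\Delta_{23}$ (the case $k=1$) to reach $\frac3s$. Together these show that the union of the left family is the half-open interval $\bigl(\frac{2}{s-1},\frac3s\bigr]$ enlarged to the right, since the left endpoints decrease to $\frac{2}{s-1}$. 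By symmetry, the right family covers a neighbourhood of $1$ from the left.

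\emph{Main obstacle.} The delicate points are the junctions and the two endpoints of $I$. First, the endpoints $\frac2{s-1}=\Delta_{(2)}$ and $1=\Delta_{(s-1)}$ are only limits of the listed cylinders, so a literal countable union of closed intervals gives $I$ minus possibly these two points. I will therefore either read the union as its closure or adjoin $\Delta_{(2)}$ and $\Delta_{(s-1)}$ explicitly. Second, the junction between the middle block and the right family must be checked carefully, because the right family starts at $k=2$. I will verify directly that the right endpoint of $\Delta_{[s-2]}$ is at least the left endpoint of the largest-$k$ member reaching it. If this fails, as it appears to for small $s$, for example $s=4$ where the middle block is empty, I will add the index $k=1$ or treat $s=4$ and $s=6$ by direct computation.
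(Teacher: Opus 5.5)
\textbf{Verdict: the proposal has a genuine gap, and it cannot be closed, because the equality as printed is false.} Your Step~1 (inclusion in $I$) is sound. Step~2 (covering) fails. The paper itself gives no argument for this corollary; it simply states it after the theorem on $I$.

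Write $\Delta_{2^k3}$ for the cylinder whose base is $k$ twos followed by $3$. With the interval description of Section~2 one gets exactly
\[
\Delta_{2^k3}=\left[\frac{2}{s-1}+\frac{s-3}{s^{k+1}(s-1)},\ \frac{2}{s-1}+\frac{2}{s^{k+1}}\right].
\]
Two of your postponed checks reduce to the same condition. The first is that the right end of $\Delta_{2^{k+1}3}$ reaches the left end of $\Delta_{2^k3}$. The second is that $\Delta_{23}$ reaches $\frac3s$. Both reduce to $2(s-1)\ge s(s-3)$, i.e.\ $s^2-5s+2\le 0$, which holds only for $s=4$.

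For $s\ge5$ the left family breaks up:
\begin{itemize}
\item consecutive members are separated by open gaps of length $\frac{s^2-5s+2}{s^{k+2}(s-1)}$;
\item $\Delta_{23}$ stops $\frac{s^2-5s+2}{s^2(s-1)}$ short of $\frac3s$.
\end{itemize}
No other listed cylinder meets these gaps. The middle block starts at $\frac3s$, and the right family (from $k=2$) lies in $\left[1-\frac{2}{s^3},1\right)$. For instance, for $s=6$ the interval $\left(\frac{221}{540},\frac{5}{12}\right)$ between $\Delta_{223}$ and $\Delta_{23}$ lies in $I=\left[\frac25,1\right]$ and is not covered.

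For $s=4$ the left family does chain, but the middle block is empty. The left family covers only $\left(\frac23,\frac{19}{24}\right]$ and the right family lies in $\left[\frac{31}{32},1\right)$. So, for example, the midpoint $\frac56$ of $I$ is missed. Adding $k=1$ on the right only lowers that family to $\frac78$.

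The same points are missed if the cylinders are read as their traces on $E_s$, since those traces are smaller. The two uncovered endpoints of $I$, which you correctly noticed, are a further, minor defect.

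So the fallbacks in your last paragraph cannot work: what is missing is whole families of cylinders, not a junction index. What your interval bookkeeping does prove, for $s\ge5$, is
\[
I=\left\{\frac{2}{s-1},1\right\}\cup\bigcup_{c=3}^{s-2}\Delta_c\cup\bigcup_{k\ge1}\bigcup_{c=3}^{s-1}\left(\Delta_{2^kc}\cup\Delta_{[s-1]^k[s+1-c]}\right).
\]
Here the level-$k$ blocks are $\bigcup_{c=3}^{s-1}\Delta_{2^kc}=\left[\frac{2}{s-1}+\frac{s-3}{s^{k+1}(s-1)},\frac{2}{s-1}+\frac{s-2}{s^{k+1}}\right]$. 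They chain, and level $1$ reaches $\frac3s$, because $(s-2)(s-1)\ge s(s-3)$. The mirror family is obtained via $h$.

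For $s=4$ one must in addition bridge $\left[\frac{19}{24},\frac78\right]$, e.g.\ by $\Delta_{30}\cup\Delta_{25}=\left[\frac34,\frac{11}{12}\right]$.

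A smaller point: your remark that each listed cylinder lies in $E_s$ ``by the IFS description'' conflates the interval $[a_r,b_r]$ with $w_{c_1}\circ\cdots\circ w_{c_r}(E_s)$. For example, the interval $\Delta_0$ contains the image under $w_0$ of the gap $\left(\frac{s+1}{s(s-1)},\frac2s\right)$ of $E_s$. For cylinders inside $I$, inclusion in $E_s$ follows only from $I\subset E_s$.
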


\begin{theorem}
\label{T:N-Self-Similarity}
The set $E_s \setminus int I$ is a countable union of pairwise disjoint affine copies of $E_s$, namely,
\begin{equation}\label{eq:fpmv}
E_s=\left(\frac{2}{s-1},1\right)\cup\bigcup\limits_{i=0}^{\infty}[\varphi_i(E_s)\cup h(\varphi_i(E_s))],
\end{equation}
where
$
\varphi_i(x=\Delta_{\alpha_1\alpha_2\cdots\alpha_n\cdots})
=
\Delta_{\underbrace{2\ldots2}_i0\alpha_1\alpha_2\cdots\alpha_n\cdots}
$
is the right-shift operator on digits, and
$
h(x=\Delta_{\alpha_1\alpha_2\cdots\alpha_n\cdots})
=
\Delta_{[s+1-\alpha_1]\cdots[s+1-\alpha_n]\cdots}
=
\frac{s+1}{s-1}-x
$
is the digit inversion of the $\Delta$-representation.
\end{theorem}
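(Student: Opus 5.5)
The plan is to use the first-digit decomposition $E_s=\bigcup_{i\in A_0}w_i(E_s)$ repeatedly along the digit $2$. Together with the symmetry $h$ and the previous theorem, this gives \eqref{eq:fpmv}. Disjointness will then follow from the gap computations already carried out in the proof of that theorem.

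\textbf{Step 1: the part of $E_s$ left of $I$.} For $3\le i\le s-2$ we have $w_i(E_s)=\Delta_i\cap E_s\subset I$, by the preceding Corollary. Indeed, $\Delta_i$ is an interval inside $[\frac{3}{s},1]$, and the preceding theorem gives $[\frac{3}{s},1]\subset E_s$. Hence every $x\in E_s$ with $x<\frac{2}{s-1}$ has first digit $0$ or $2$ in every $L_0$-representation: digit $s-1$ or $s+1$ would force $x\ge \frac{s-1}{s}>\frac{2}{s-1}$ when $s\ge 4$. If the first digit is $0$, then $x\in\varphi_0(E_s)=w_0(E_s)$. If it is $2$, then $x=w_2(y)$ with $y\in E_s$ and $y<\frac{2}{s-1}$, since $w_2$ fixes $\frac{2}{s-1}=\Delta_{(2)}$ and is increasing. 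Iterating, either the digit string begins $2^i0$, giving $x\in\varphi_i(E_s)$, or it is $(2)$, giving $x=\frac{2}{s-1}$. Conversely, each $\varphi_i(E_s)=w_2^{\,i}w_0(E_s)\subset E_s$. So
\[
E_s\cap\left[0,\tfrac{2}{s-1}\right)=\bigcup_{i\ge0}\varphi_i(E_s).
\]
The point $\frac{2}{s-1}$ itself is the limit point not contained in any $\varphi_i(E_s)$. I would therefore read the open interval in \eqref{eq:fpmv} as the closed $I$, or note that its endpoints belong to the closure of the union; this endpoint bookkeeping needs care.

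\textbf{Step 2: symmetry.} The map $h(x)=\frac{s+1}{s-1}-x$ maps $A_0$ to itself digitwise, since $\alpha\mapsto s+1-\alpha$ preserves $A_0$. Hence $h(E_s)=E_s$, and $h$ swaps $[0,\frac{2}{s-1})$ with $(1,\frac{s+1}{s-1}]$, because $h(\frac{2}{s-1})=1$. Applying $h$ to Step 1 gives
\[
E_s\cap\left(1,\tfrac{s+1}{s-1}\right]=\bigcup_{i\ge0}h(\varphi_i(E_s)).
\]
Combining this with $E_s\cap I=I$ from the previous theorem yields the union formula.

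\textbf{Step 3: pairwise disjointness.} Each $\varphi_i(E_s)$ lies in the cylinder $\Delta_{2^i0}=[a_{i+1},a_{i+1}+\frac{1}{s^{i+1}(s-1)}]$. In the previous proof these cylinders for consecutive $i$ were shown to be disjoint, with a gap between them, and they increase towards $\frac{2}{s-1}$. Thus the $\varphi_i(E_s)$ are pairwise disjoint and all lie strictly left of $\frac{2}{s-1}$. By symmetry the $h(\varphi_i(E_s))$ are pairwise disjoint and lie strictly right of $1$, so they are also disjoint from the $\varphi_j(E_s)$ and from $I$.

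Finally, each $\varphi_i$ is an affine similarity with ratio $s^{-(i+1)}$, and $h\circ\varphi_i$ is affine as well, so every piece is an affine copy of $E_s$. The main obstacle is Step 1: one must ensure that a point below $\frac{2}{s-1}$ cannot have a representation starting with a digit $\ge3$ while some other representation starts with $2$ or $0$. This is settled by the observation that membership in $\varphi_i(E_s)$ only requires the existence of one suitable representation, so no uniqueness of representations is needed.
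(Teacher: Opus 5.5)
Your proposal is correct and follows the paper's own route: the first-digit decomposition $E_s=W(E_s)$, the symmetry $h$, and the gaps between the cylinders $\Delta_{2\ldots20}$, with the iteration along the digit $2$ (which the paper leaves implicit) spelled out; for $3\le i\le s-2$ you only need, and only have, the inclusion $w_i(E_s)\subset\Delta_i\subset I$, not an equality with $\Delta_i\cap E_s$. Your endpoint remark is also right: $\frac{2}{s-1}$ and $1$ belong to $E_s$ but to none of the pieces, so \eqref{eq:fpmv} holds only with the closed interval $I$ in place of $\left(\frac{2}{s-1},1\right)$.
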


\begin{proof}
Note that $\varphi_i(x)$ is a similarity transformation with ratio
$
k_i=\frac{1}{s^{i+1}},
$
and hence $E_s$ and $\varphi_i(E_s)$ are similar. The mapping $h(x)$ is the central symmetry with respect to the point $\frac{s+1}{2(s-1)}$ of the set $E_s$. Therefore, $\varphi_i(E_s)$ and
$h(\varphi_i(E_s))$ are isometric figures.

Taking into account the consequences of the previous lemma and theorem, we have
\[
\varphi_i(E_s)=\Delta_{\underbrace{2\ldots20}_i}\cap E_s,
\qquad
h(\varphi_i(E_s))=\Delta_{\underbrace{[s-1]\ldots[s-1][s+1]}_i}\cap E_s.
\]
The set $E_s \subset H$ is the attractor of the IFS $W$, and by
Lemma~\ref{Property IFS},
$$
w_0(H) \cap w_i(H) = \varnothing,\qquad
w_i(H) \cap w_{s+1}(H) = \varnothing
\quad \text{for } 2 \leq i \leq s;
$$
$$
w_0(H) \cap I = \varnothing,
\qquad
w_{s}(H) \cap I = \varnothing.
$$

Using again the consequences of the previous lemma and theorem, together with the identities
\[
\varphi_i(E_s)=\Delta_{\underbrace{2\ldots20}_i}\cap E_s,
\qquad
h(\varphi_i(E_s))=\Delta_{\underbrace{[s-1]\ldots[s-1][s+1]}_i}\cap E_s,
\]
and
\[
\rho(\Delta_{\underbrace{2\ldots20}_i},\Delta_{\underbrace{2\ldots20}_{i+1}})
=
\frac{s-2}{s^i(s-1)}>0,
\]
we obtain~\eqref{eq:fpmv}.
\end{proof}

\begin{theorem}
The interior $int E_s$ of $E_s$ has Lebesgue measure equal to $1$.
\end{theorem}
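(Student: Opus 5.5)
The plan is to use the self-similar decomposition \eqref{eq:fpmv} of Theorem~\ref{T:N-Self-Similarity} and set up a linear equation for the measure of the interior. Write $\lambda$ for Lebesgue measure and put $\mu=\lambda(\operatorname{int}E_s)$. By \eqref{eq:fpmv},
$$
E_s=\left(\tfrac{2}{s-1},1\right)\cup\bigcup_{i\ge0}\bigl[\varphi_i(E_s)\cup h(\varphi_i(E_s))\bigr],
$$
and the pieces are pairwise disjoint. The pieces $\varphi_i(E_s)$ lie in the cylinders $\Delta_{2\ldots20}$, which are separated by positive gaps. These pieces accumulate only at the point $\frac{2}{s-1}$, and symmetrically the pieces $h(\varphi_i(E_s))$ accumulate only at $1$.

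The first step is to compute the interior. I will show that
$$
\operatorname{int}E_s=\left(\tfrac{2}{s-1},1\right)\cup\bigcup_{i\ge0}\bigl[\operatorname{int}\varphi_i(E_s)\cup\operatorname{int}h(\varphi_i(E_s))\bigr].
$$
For a point of $\varphi_i(E_s)$, a small neighbourhood meets only the single piece $\varphi_i(E_s)$, because of the positive gaps between consecutive cylinders. So interior points of $E_s$ lying in that piece are exactly the interior points of the piece. The endpoints $\frac{2}{s-1}$ and $1$ are not interior points, by the maximality part of the theorem on $I$ (gaps of $E_s$ accumulate to them from outside $I$). Since $\varphi_i$ is a similarity with ratio $s^{-(i+1)}$ and $h$ is an isometry, $\operatorname{int}\varphi_i(E_s)=\varphi_i(\operatorname{int}E_s)$ has measure $s^{-(i+1)}\mu$, and likewise for the $h$-images.

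The second step is the resulting equation for $\mu$. From the interior decomposition,
$$
\mu=\Bigl(1-\frac{2}{s-1}\Bigr)+2\mu\sum_{i\ge0}s^{-(i+1)}=\frac{s-3}{s-1}+\frac{2\mu}{s-1}.
$$
Since $\mu\le\lambda(H)<\infty$, this can be solved, giving $\mu\bigl(1-\frac{2}{s-1}\bigr)=\frac{s-3}{s-1}$, i.e. $\mu=1$ (recall $s\ge4$). As a remark, iterating the decomposition exhibits $\operatorname{int}E_s$ as a countable disjoint union of open intervals, namely $I$ and its images under compositions of the maps $\varphi_i$ and $h\varphi_i$. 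Their lengths sum to the same value $\frac{s-3}{s-1}\sum_{n\ge0}\bigl(\frac{2}{s-1}\bigr)^n=1$, which confirms the remark preceding the theorem.

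The main obstacle is the first step: justifying rigorously that no interior points arise beyond those listed. The crucial points are $\frac{2}{s-1}$ and $1$, where infinitely many pieces accumulate and a neighbourhood does not lie in a single piece. There I will use the explicit gaps $\bigl(a_k+\frac{1}{s^k(s-1)},a_k+\frac{1}{s^k}\bigr)$ from the proof of the previous theorem, together with their mirror images under $h$.
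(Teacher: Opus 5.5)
Your argument is correct and follows the paper's route: the paper obtains the same equation $\mu=\frac{s-3}{s-1}+\frac{2}{s-1}\mu$ directly from \eqref{eq:fpmv} by similarity scaling, while you also justify that the interior splits along the decomposition and that $\mu<\infty$, both of which the paper takes for granted. One caution: the explicit gaps you plan to quote are misstated in the paper, since the rank-$k$ cylinder $\Delta_{2\ldots20}$ is $\bigl[a_k,a_k+\frac{s+1}{s^k(s-1)}\bigr]$ and the true gaps are $\bigl(a_k+\frac{s+1}{s^k(s-1)},a_k+\frac{2}{s^k}\bigr)$ (the quoted interval for $s=4$, $k=1$ is $\bigl(\frac{1}{12},\frac14\bigr)$, which contains $\frac18\in E_4$), but this affects only the two endpoints $\frac{2}{s-1}$ and $1$, which do not matter for the measure.
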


\begin{proof}
It is well known that if the sets $D_1$ and $D_2$ are similar with similarity ratio $k$, then their one-dimensional Lebesgue measures satisfy
$
\lambda(D_2)=k\lambda(D_1).
$
Taking into account Theorem~\ref{T:N-Self-Similarity}, and equality~\eqref{eq:fpmv} in particular, we obtain
$$
\lambda(\operatorname{int} E_s)
=
\frac{s-3}{s-1}
+\frac{2}{s}\lambda(int E_s)
+\frac{2}{s^2}\lambda(int E_s)
+\frac{2}{s^3}\lambda(int E_s)
+\cdots
=
$$
$$
=
\frac{s-3}{s-1}
+\frac{2}{s-1}\lambda(int E_s).
$$
Hence
$
\left(1-\frac{2}{s-1}\right)\lambda(\operatorname{int} E_s)=\frac{s-3}{s-1},
$
that implies
$
\lambda(\operatorname{int} E_s)=1.
$
\end{proof}

\begin{theorem}
The boundary $fr E_s$ of the Cantorval $E_{s}$ is an $N$-self-similar set with Hausdorff--Besicovitch dimension $\log_{s}{3}$.
\end{theorem}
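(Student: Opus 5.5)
Taking boundaries in decomposition \eqref{eq:fpmv} of Theorem~\ref{T:N-Self-Similarity} gives the structure of $fr E_s$. The open interval $\left(\frac{2}{s-1},1\right)$ lies in $\operatorname{int} E_s$ and contributes no boundary points, except possibly its endpoints. The pieces $\varphi_i(E_s)$ and $h(\varphi_i(E_s))$ are pairwise disjoint compact sets lying in the pairwise separated cylinders $\Delta_{2\ldots20}$ and $\Delta_{[s-1]\ldots[s-1][s+1]}$. These pieces accumulate only at $\frac{2}{s-1}$ and $1$. Since $\varphi_i$ and $h$ are similarities and $\rho(\Delta_{2\ldots20},\Delta_{2\ldots20\,\text{next}})>0$, the boundary of each piece relative to $\mathbb{R}$ equals its image boundary. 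The endpoints $\frac{2}{s-1}$ and $1$ belong to $fr E_s$, because the previous theorem exhibits gaps accumulating at them from outside $I$. This gives
\[
fr E_s=\left\{\tfrac{2}{s-1},1\right\}\cup\bigcup_{i=0}^{\infty}\bigl[\varphi_i(fr E_s)\cup h(\varphi_i(fr E_s))\bigr],
\]
so $fr E_s$ is invariant under a countable (N-) system of similarities with ratios $s^{-(i+1)}$, each ratio appearing twice. This is what we mean by $N$-self-similar.

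For the dimension we use the standard theory of countable conformal IFS satisfying the open set condition (Mauldin--Urbański, or Moran's theorem extended to countably many maps). The open set condition holds with the open interval $\operatorname{int} H$, because the images lie in disjoint cylinders. The dimension $d$ is then the solution of the Moran equation
\[
\sum_{i=0}^{\infty}2\,s^{-(i+1)d}=1,\qquad\text{i.e.}\qquad \frac{2s^{-d}}{1-s^{-d}}=1,
\]
which gives $s^{-d}=\frac13$, so $d=\log_s 3$. The two extra endpoints are countably many iterated points and do not affect the dimension.

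To stay self-contained rather than citing infinite-IFS results, we would prove the two bounds directly.

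\emph{Upper bound.} Covering by cylinders, the number of level-$n$ cylinder-pieces of length $\asymp s^{-n}$ meeting $fr E_s$ satisfies $N_n=2\sum_{i\ge0}N_{n-i-1}+O(1)$. This recursion yields $N_n\asymp 3^n$, so the upper box dimension, and hence $\dim_H$, is at most $\log_s3$.

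\emph{Lower bound.} Truncate to the finite subsystem $\{\varphi_i,h\varphi_i: i<K\}$. It satisfies the strong separation condition, so its attractor has dimension $d_K$ solving $\sum_{i<K}2s^{-(i+1)d_K}=1$. We have $d_K\to\log_s3$, and each such attractor is contained in $fr E_s$ by invariance and closedness.

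The main obstacle is the first step: rigorously identifying $fr E_s$ via the decomposition. Two points need checking. First, no boundary points come from interior points of the pieces; this holds because the pieces are separated from one another and from $\left(\frac{2}{s-1},1\right)$, apart from the endpoints. Second, the accumulation points $\frac{2}{s-1}$ and $1$ must be handled correctly. We will verify the inclusion $fr(\varphi_i(E_s))\subset fr E_s$ using the positive gaps between consecutive cylinders $\Delta_{2\ldots20}$.
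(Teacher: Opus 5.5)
Your proposal is correct and follows the paper's route: you split $fr E_s$ into the pairwise separated similar copies $\varphi_i(fr E_s)$ and $h(\varphi_i(fr E_s))$ with ratios $s^{-(i+1)}$, and then solve the Moran-type equation $2\sum_{i\ge 1}s^{-ix}=1$ to obtain $\log_s 3$. Your version is slightly more careful than the paper's in two respects: you keep the two accumulation points $\frac{2}{s-1}$ and $1$, which lie in $fr E_s$ but in none of the cylinders $\Delta_{2\ldots20}$ or $\Delta_{[s-1]\ldots[s-1][s+1]}$, so the paper's displayed union omits them; and you justify the equality of Hausdorff and similarity dimension, via box-counting and finite truncations, where the paper simply asserts it.
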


\begin{proof}
The boundary
$
fr E_s=E_s\setminus int E_s
$
of the set $E_s$ is an $N$-self-similar set (it is a union of pairwise disjoint affine copies of itself) since
\[
fr E_s=\bigcup\limits_{i=1}^{\infty}[\Delta_{\underbrace{2...20}_i}\cap fr E_s]\cup\left[\bigcup\limits_{i=1}^{\infty}\Delta_{\underbrace{[s-1]...[s-1][s+1]}_{i}}\cap fr E_s\right],
\]
and the distances satisfy
$$
\rho(\Delta_{\underbrace{2\ldots20}_i},\Delta_{\underbrace{2\ldots20}_{i+1}})
=
\rho(\Delta_{\underbrace{[s-1]...[s-1][s+1]}_i},\Delta_{\underbrace{[s-1]...[s-1][s+1]}_{i+1}})
=$$
$$
=
\frac{s-2}{s^i(s-1)}>0,
$$
while $fr E_s$ is similar to
$
\Delta_{\underbrace{2...20}_{i}}\cap fr E_s
=
\Delta_{\underbrace{[s-1]...[s-1][s+1]}_{i}}\cap fr E_s
$
with similarity coefficient
$
k_i=\frac{1}{s^{i}}.
$

Therefore, its Hausdorff--Besicovitch dimension is equal to its $N$-similarity dimension, which is the solution of the equation
\[
2\sum\limits_{i=1}^{\infty}k_i^x=1
\Longleftrightarrow
2\sum\limits_{i=1}^{\infty}s^{-ix}=1,
\]
namely,
$
\dim_H fr E_s=\log_{s}{3}.
$
\end{proof}

\begin{corollary}
The boundary of the Guthrie--Nymann Cantorval (the case $s=4$) has Hausdorff--Besicovitch dimension equal to $\log_{4}{3}$.
\end{corollary}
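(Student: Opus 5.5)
The corollary is the case $s=4$ (that is, $m=1$) of the preceding theorem, so the plan is to specialise that theorem and check that its hypotheses and conclusion cover the Guthrie--Nymann Cantorval.

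First I identify the set. For $s=4$ the alphabet is $A_0=A\setminus\{1,4\}=\{0,2,3,5\}$, so
\[
E_4=\Bigl\{\sum_{n\ge1}\varepsilon_n 4^{-n}:\ \varepsilon_n\in\{0,2,3,5\}\Bigr\}.
\]
This is exactly the Guthrie--Nymann set $X$ from the introduction, i.e. the set of subsums of the series $\frac34+\frac24+\frac3{4^2}+\frac2{4^2}+\cdots$. Hence $fr X=fr E_4$.

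Next I apply the preceding theorem with $s=4$, which gives $\dim_H fr E_4=\log_4 3$. To make sure the specialisation is legitimate, I re-check the ingredients of its proof at $s=4$:
\begin{itemize}
\item The longest interval is $I=[\frac{2}{3},1]$, obtained from the inclusion $[\frac34,1]\subset E_4$ and the symmetry about $\frac{5}{6}$. The inequality $\frac{3}{s}<\frac{s+1}{2(s-1)}$ becomes $\frac34<\frac56$, which holds.
\item The gaps between the cylinders are $\rho(\Delta_{2\ldots20},\Delta_{2\ldots20\,\cdot})=\frac{s-2}{s^i(s-1)}=\frac{2}{3\cdot4^i}>0$, so the pieces in the decomposition of $fr E_4$ are pairwise disjoint.
\item The $N$-self-similar decomposition of $fr E_4$ consists of two families of copies with ratios $4^{-i}$, $i\ge1$.
\end{itemize}

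Finally I solve the similarity-dimension equation $2\sum_{i\ge1}4^{-ix}=1$. Setting $y=4^{-x}$, it reads $\frac{2y}{1-y}=1$, so $y=\frac13$ and $x=\log_4 3$.

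The only genuinely delicate point is the one inherited from the theorem: that for a countable, disjoint, well-separated family of similarities, the Hausdorff dimension of the attractor equals the $N$-similarity dimension. That step is taken from the theorem as stated, so nothing new is needed here.
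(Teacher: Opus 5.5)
Your proposal is correct and follows the paper's route: the corollary is the $s=4$ case of the preceding theorem, with $E_4=X$ because $A_0=\{0,2,3,5\}=\{3\varepsilon+2\delta:\varepsilon,\delta\in\{0,1\}\}$, and $2\sum_{i\ge1}4^{-ix}=1$ gives $x=\log_4 3$. The gap value you copied from the paper is slightly off: between the rank-$i$ and rank-$(i+1)$ cylinders $\Delta_{2\ldots20}$ it is $\frac{s-3}{s^i(s-1)}$, i.e.\ $\frac{1}{3\cdot4^i}$ for $s=4$, but the argument only uses that this gap is positive.
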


\end{article}

\ifx\ArticleWithinIssue\undefined\newpage\authortitleforheadfootcontent{}{}\thispagestyle{empty}\setcounter{tocdepth}{0}\renewcommand{\contentsname}{\centerline{CONTENTS}}\tableofcontents\end{document}\fi 